\theoremstyle{plain}
\newtheorem{lemma}{Lemma}
\newtheorem{theorem}[lemma]{Theorem}
\newtheorem{corollary}[lemma]{Corollary}
\newtheorem{definition}[lemma]{Definition}
\theoremstyle{remark}
\newtheorem{remark}[lemma]{Remark}
\newcommand  {\G}{G}
\newcommand  {\K}{K}
\newcommand  {\N}{\mathbb{N}}
\newcommand  {\R}{\mathbb{R}}
\newcommand  {\I}{\mathbb{I}}
\renewcommand{\d}{\mathrm{d}}
\newcommand  {\e}{\mathrm{e}}
\newcommand  {\Ic}{\mathcal{I}}
\newcommand  {\Jc}{\mathcal{J}}
\newcommand{\erfc}{\operatorname{erfc}}
\newcommand{\eps}{\varepsilon}
\newcommand  {\HK}{\Theta}
\newcommand{\full}{{\operatorname{full}}}
\renewcommand{\star}{*}
\newcommand*{\norm}[3]{\lVert #2 \rVert_{#3}^{#1}}
\newcommand*{\abs}[1]{\lvert #1 \rvert}
\begin{document}
\title[Breakdown of precipitation bands]%
{Breakdown of Liesegang precipitation bands \\
in a simplified fast reaction limit\\ of the Keller--Rubinow model}

\author[Z. Darbenas]{Zymantas Darbenas}
\email[Z. Darbenas]{z.darbenas@jacobs-university.de}
\author[M. Oliver]{Marcel Oliver$^{*}$}
\thanks{$^{*}$Corresponding author, m.oliver@jacobs-university.de}
\email[M. Oliver]{m.oliver@jacobs-university.de}

\address[Z. Darbenas and M. Oliver]%
{School of Engineering and Science \\
 Jacobs University \\
 28759 Bremen \\
 Germany}

\keywords{Nonlinear integral equations, relay hysteresis,
reaction-diffusion equations, Liesegang rings}
\subjclass[2010]{34C55, 45G10}

\date{\today}

\begin{abstract}
We study solutions to the integral equation
\begin{equation*}
  \omega(x) 
  = \Gamma - x^2 \int_{0}^1 
      K(\theta) \, H(\omega(x\theta)) \, \d \theta 
\end{equation*}
where $\Gamma>0$, $K$ is a weakly degenerate kernel satisfying, among
other properties, $K(\theta) \sim k \, (1-\theta)^\sigma$ as
$\theta \to 1$ for constants $k>0$ and $\sigma \in (0, \log_2 3 -1)$,
$H$ denotes the Heaviside function, and $x \in [0,\infty)$.  This
equation arises from a reaction-diffusion equation describing
Liesegang precipitation band patterns under certain simplifying
assumptions.  We argue that the integral equation is an analytically
tractable paradigm for the clustering of precipitation rings observed
in the full model.  This problem is nontrivial as the right hand side
fails a Lipschitz condition so that classical contraction mapping
arguments do not apply.

Our results are the following.  Solutions to the integral equation,
which initially feature a sequence of relatively open intervals on
which $\omega$ is positive (``rings'') or negative (``gaps'') break
down beyond a finite interval $[0,x^*]$ in one of two possible ways.
Either the sequence of rings accumulates at $x^*$ (``non-degenerate
breakdown'') or the solution cannot be continued past one of its
zeroes at all (``degenerate breakdown'').  Moreover, we show that
degenerate breakdown is possible within the class of kernels
considered.  Finally, we prove existence of generalized solutions
which extend the integral equation past the point of breakdown.
\end{abstract}

\maketitle

\section{Introduction}

Reaction-diffusion equations with discontinuous hysteresis occur in a
range of modeling problems \cite{BrokateS:1996:HysteresisPT,
KrasnoselskiiP:1989:SystemsH, Mayergoyz:1991:MathematicalMH,
Visintin:1994:DifferentialMH, Visintin:2014:TenIH}.  We are
particularly interested in non-ideal relays---two-valued operators
where the output switches from the ``off-state'' $0$ to the
``on-state'' $1$ when the input crosses a threshold $\beta$, and
switches back to zero only when the input drops below a lower
threshold $\alpha<\beta$.  There are different choices to define the
behavior of the relay at the threshold.  The relay may be restricted
to binary values and jump when the threshold is reached or exceeded.
Alternatively, the relay may be \emph{completed}: when the threshold
is reached but not exceeded, the relay may take fractional values
which can change monotonically in time; when the input drops below the
threshold without having crossed, the attained fractional value gets
``frozen in''.  See, e.g., \cite{CurranGT:2016:RecentAR} for a
detailed description of different relay behaviors.

Rigorous mathematical results are of two types.  For
reaction-diffusion equations with completed relays, weak limit
arguments lead to existence of solution
\cite{Visintin:1986:EvolutionPH, AikiK:2008:MathematicalMB} but not
necessarily their uniqueness and continuous dependence on the data.
For reaction-diffusion equations with non-completed non-ideal relays,
local well-posedness, including uniqueness and continuous dependence,
holds true provided that a certain transversality condition on the
data is satisfied.  The solution can be continued in time for as long
as the transversality condition remains satisfied
\cite{GurevichT:2012:UniquenessTS, GurevichST:2013:ReactionDE,
CurranGT:2016:RecentAR}.  We finally remark that for some types of
spatially distributed hysteresis, variational approaches may be
available \cite{MielkeTL:2002:VariationalFR}.

In this paper, we study an explicit example of a reaction-diffusion
equation with relay hysteresis which demonstrates that, in general,
global-in-time solutions require the notion of a completed relay.
Our example is motivated from the study of the fast reaction limit,
introduced by Hilhorst \emph{et al.}\ \cite{HilhorstHM:2007:FastRL,
HilhorstHM:2009:MathematicalSO}, of the Keller and Rubinow model for
Liesegang precipitation rings \cite{KellerR:1981:RecurrentPL}.  This
limit model, which we will refer to as the \emph{HHMO-model}, is a
scalar reaction-diffusion equation driven by a point source which is
constant in parabolic similarity variables with a reaction term
modeled by a relay with a positive upper threshold and zero lower
threshold.  As a consequence, at a fixed location in space, the
reaction, once switched on, can never switch off.  The loci of
reaction then form a spatial precipitation pattern.

Simple as it seems, an analysis of the HHMO-model faces the same type
of difficulty as the analysis of other reaction-diffusion equations
with relay hysteresis; in particular, the questions of global
uniqueness and continuous dependence on the data remain open.  Our aim
here is to provide insight into the essential features of the
distributed relay dynamics.  We make use of a remarkable feature of
the HHMO-model: it can be formally simplified to an equation,
different but qualitatively similar to the actual HHMO-model, that is
self-similar in parabolic similarity variables.  This new model, which
we shall refer to as the \emph{simplified HHMO-model}, reduces to a
single scalar integral equation, i.e., can be considered as a scalar
dynamical system with memory.  The simplified model is finally simple
enough that a fairly complete explicit analysis is possible, which is
the main contribution of this paper.

We prove that the binary precipitation pattern in the dynamics of the
simplified HHMO-model must break down in finite space-time.  Beyond
the point of breakdown, it can only be continued as a generalized
solution.  We think of the behavior prior to breakdown as analogous to
the well-posedness result for binary switching relays in the spirit of
Gurevich \emph{et al.}\ \cite{GurevichST:2013:ReactionDE} and the
behavior past the point of breakdown as generalized solutions in the
sense of Visintin \cite{Visintin:1986:EvolutionPH}.  While these
analogies are tentative and we make no claim that the simplified
HHMO-model reflects the behavior of true Liesegang precipitation
patterns, the study of this model offers a paradigm for the breakdown
of binary patterns.  In particular, it gives insight that breakdown
can happen in two distinct ways.  We believe that more general
models---which may not share the symmetry which makes the explicit
results of this paper possible---are capable of exhibiting the
behaviors observed here, so that the results of this paper provide a
lower bound on the complexity which must be addressed when studying
more general situations.  We also offer a possible perspective for a
reformulation of the problem that may lead to well-posedness past the
point of breakdown.

To be specific, the simplified HHMO-model can be formulated as
\begin{equation}
  \label{omega.explicit.0}
  \omega(x) 
  = \Gamma - x^2 \int_{0}^1 
      K(\theta) \, H(\omega(x\theta)) \, \d \theta \,,
\end{equation}
where $\omega(x)$ is the excess reactant concentration at the source
point, $\Gamma$ is a positive constant, $H$ denotes the Heaviside
function, and $K$ is a unimodal kernel, continuous on $[0,1]$,
continuously differentiable on $[0,1)$, and twice continuously
differentiable in the interior of this interval, with the following
properties:
\begin{enumerate}[label={\upshape(\roman*)}]
\item \label{i.k1} 
$K(\theta)$ is non-negative with $K(0)=K'(0)=0$,
\item \label{i.k2} 
$K(\theta) \sim k \, \sqrt{1-\theta}$ as
$\theta\to1$ for some $k>0$,
\item \label{i.k3} 
there exists $\theta^\star \in (0,1)$ such that $K''(\theta)>0$ for
$\theta \in (0,\theta^\star)$ and $K''(\theta)<0$ for
$\theta \in (\theta^\star,1)$.
\end{enumerate}
These properties imply, in particular, that $K>0$ on $(0,1)$ and
$K(1)=0$.

Clearly, at $x_0=0$, $\omega(x_0) = \Gamma >0$ and there must be a
point $x_1$ at which $\omega$ changes sign, i.e., where the
concentration falls below the super-saturation threshold.  Continuing,
we may define a sequence $x_i$ of loci where $\omega$ changes sign, so
that $(x_i,x_{i+1})$ corresponds to a ``ring'' or ``band'' where
precipitation occurs when $i$ is even and to a precipitation gap when
$i$ is odd.  Given the physical background of the problem, we might
think that the $x_i$ form an unbounded sequence, indicating that the
entire domain is covered by a pattern of rings or gaps, or, if the
sequence is finite, that the last ring or gap extends to infinity.

Our first result proves that this is not the case: The sequence $x_i$
either has a finite accumulation point $x^*$ or there is a finite
index $i$ such that $\omega$ cannot be extended past $x^*=x_i$ in the
sense of equation \eqref{omega.explicit.0}.  We call the former case
non-degenerate, the latter degenerate.

Our second result demonstrates the existence of degenerate solutions
to \eqref{omega.explicit.0}.  To this end, we present the construction
of a kernel where the solution cannot be continued past the first gap,
i.e., where the point of breakdown is $x^*=x_2$.

To extend the solution past $x^*$, we introduce the concept of
\emph{extended solutions}, reflecting the concept of a completed relay
in the spirit of \cite{Visintin:1986:EvolutionPH} and also
\cite{HilhorstHM:2009:MathematicalSO}.  Extended solutions are pairs
$(\omega,\rho)$ where $\omega \in C([0,\infty))$ and
\begin{equation}
  \label{HHMO.extended.0}
  \omega(x) = \Gamma-x^2 \int_0^1K(\theta) \,
    \rho(x\theta) \, \d\theta \,,
\end{equation}
subject to the condition that $\rho$ takes values from the Heaviside
graph, i.e.,
\begin{equation}
\label{Heaviside}
  \rho(y)\in H(\omega(y))
  = \begin{cases}
      0 & \text{if }\omega(y)<0 \,, \\
      [0,1] & \text{if }\omega(y)=0 \,, \\
      1 & \text{otherwise} \,.
    \end{cases}
\end{equation}
As our third result, we prove existence of extended solutions.
Extended solutions are unique under the condition that they are
\emph{regularly extended}, namely that $\omega$ remains identically
zero on some right neighborhood $[x^\star, b)$ past the point of
breakdown.

The remainder of the paper is structured as follows.  In
Section~\ref{s.HHMO}, we recall some background on Liesegang rings and
the fast reaction limit of the Keller--Rubinow model.  In
Section~\ref{s.simplified}, we simplify the model to the scalar
integral equation \eqref{omega.explicit.0} and present arguments and
numerical evidence that the simplified model reflects the qualitative
behavior of the full model.  We then proceed to show, in
Section~\ref{s.nondeg}, that the sequence of precipitation bands
either terminates finitely or has a finite accumulation point.  In
Section~\ref{s.degenerate}, we provide a construction that shows that
within the class of kernels considered, finite termination is
possible.  Section~\ref{exist.simplified} discusses extended solutions
in the sense of \eqref{HHMO.extended.0}.  We conclude with a brief
discussion and outlook.

\section{The Keller--Rubinow model in the fast reaction limit}
\label{s.HHMO}

Liesegang precipitation bands are structured patterns in
reaction-diffusion kinetics which emerge when, in a chain of two
chemical reactions, the second reaction is triggered upon exceeding a
supersaturation threshold and is maintained until the reactant
concentration falls below a lower so-called saturation threshold.
Within suitable parameter ranges, the second reaction will only ignite
in restricted spatial regions.  When the product of the final reaction
precipitates, these regions may be visible as ``Liesegang rings'' or
``Liesegang bands'' in reference to German chemist Raphael Liesegang
who described this phenomenon in 1896.  For a review of the history
and chemistry of Liesegang patterns, see
\cite{Henisch:1988:CrystalsGL,Stern:1954:LiesegangP}. 

Keller and Rubinow \cite{KellerR:1981:RecurrentPL} gave a quantitative
model of Liesegang bands in terms of coupled reaction-diffusion
equations, see \cite{DuleyFM:2017:KellerRM,
DuleyFM:2019:RegularizationOS} for recent results and further
references.  We note that there is a competing description in terms of
competitive growth of precipitation germs \cite{Smith:1984:OstwaldST}
which will not play any role in the following; see, e.g.,
\cite{KrugB:1999:MorphologicalCL} for a comparative discussion.

Our starting point is the fast reaction limit of the Keller--Rubinow
model, where the first-stage reaction rate constant is taken to
infinity and one of the first-stage reactant is assumed to be
immobile.  Hilhorst \emph{et al.}\
\cite{HilhorstHM:2007:FastRL,HilhorstHM:2009:MathematicalSO} proved
that, in this limit, the first-stage reaction can be solved explicitly
and contributes a point source of reactant to the second-stage
process.  Thus, only one scalar reaction-diffusion equation for the
second-stage reactant concentration $u = u(x,t)$ remains.  Formulated
on the half-line, the fast reaction limit, which we shall refer to as
the \emph{full} HHMO-model, reads as follows:
\begin{subequations}
  \label{e.original}
\begin{gather}
  u_t = u_{xx} +
        \frac{\alpha \beta}{2 \sqrt t} \, \delta (x - \alpha \sqrt{t})
        - p[x,t;u] \, u \,,
  \label{e.original.a} \\
  u_x(0,t) = 0 \quad \text{for } t \geq 0 \,, \\
  u(x,0) = 0 \quad \text{for } x>0 \,, \label{e.original.c}
\end{gather}
where $\alpha$ and $\beta$ are positive constants and the
precipitation function $p[x,t;u]$ is constrained by
\begin{equation}
\label{e.hhmo-p-weak-alternative}
  p(x,t)\in
  \begin{cases}
     0&\text{ if }\sup_{s\in[0,t]}u(x,s)<u^* \,,\\
     [0,1]&\text{ if }\sup_{s\in[0,t]}u(x,s)=u^* \,,\\
     1 &\text{ if }\sup_{s\in[0,t]}u(x,s)>u^* \,.
  \end{cases}
\end{equation}
\end{subequations}
In this expression, $u^*>0$ is the super-saturation threshold, i.e.,
the ignition threshold for the second-stage reaction.  For simplicity,
the saturation threshold is taken to be zero.  This means that once
the reaction is ignited at some spatial location $x$, it will not ever
be extinguished at $x$.

Hilhorst \emph{et al.}\ \cite{HilhorstHM:2009:MathematicalSO} proved
existence of weak solutions to \eqref{e.original}; the question of
uniqueness was left open.  It is important to note that a weak
solution is always a tuple $(u,p)$ where $p$ is constrained, but not
defined uniquely in terms of $u$, by
\eqref{e.hhmo-p-weak-alternative}.  The analytic difficulties lie in
the fact that the onset of precipitation is a free boundary in the
$(x,t)$-plane.  Moreover, the precipitation term is discontinuous, so
that most of the standard analytical tools are not applicable; in
particular, estimates based on energy stability fail.  In
\cite{Darbenas:2018:PhDThesis,DarbenasO:2018:UniquenessSK}, we are
able to prove uniqueness for at least an initial short interval of
time and derive a sufficient condition for uniqueness at later times;
we conjecture that it is possible to obtain instances of
non-uniqueness when the problem is considered with arbitrary smooth
initial data or smooth additional forcing.  One of the questions posed
in \cite{HilhorstHM:2009:MathematicalSO} is the problem of proving
that the precipitation function $p$ takes only binary values.

Numerical evidence suggests that after an initial transient period in
which a small number of rapidly shrinking rings is visible, the
solution appears to precipitate on single grid points whose exact
locations are unstable with respect to grid refinement.  Results
specifically for the HHMO-model are reported in
Section~\ref{s.simplified} below; Duley \emph{et al.}\
\cite{DuleyFM:2017:KellerRM} report similar behavior also for the
original Keller--Rubinow model.  The main result of this paper is that
we suggest a mechanism by which an actual breakdown of the ring
structure in the Keller--Rubinow model occurs.  It is made rigorous
for a simplified version of the HHMO-model, introduced in the
following, but displays features that are also seen in both of its
parent models.

\section{The simplified HHMO-model}
\label{s.simplified}

In the following, we detail the connection between the full HHMO-model
\eqref{e.original} and the integral equation \eqref{omega.explicit.0}.
The key observation is that, when written in a suitable equivalent
form, there are only two terms in the full model which do not possess
a parabolic scaling symmetry.  We cite a mixture of analytic and
numerical evidence that suggest that these terms have a negligible
impact on the long-time behavior of the solution: One of the neglected
terms represents linear damping toward equilibrium.  It is
asymptotically subdominant relative to the precipitation term;
moreover, its presence could only enhance relaxation to equilibrium.
The other term is observed to be asymptotically negligible as the
width of the precipitation rings decreases, hence its contribution
vanishes as the point of breakdown is approached.  Leaving only terms
which scale parabolically self-similarly, one of the variables of
integration in the Duhamel formula representation of the simplified
model can be integrated out, leaving an expression of the form
\eqref{omega.explicit.0} with a complicated, yet explicit expression
for the kernel $K$ which is shown, using a mixture of analysis and
numerical verification, to satisfy properties \ref{i.k1}--\ref{i.k3}.

\begin{figure}
\centering
\includegraphics[width=0.8\textwidth]{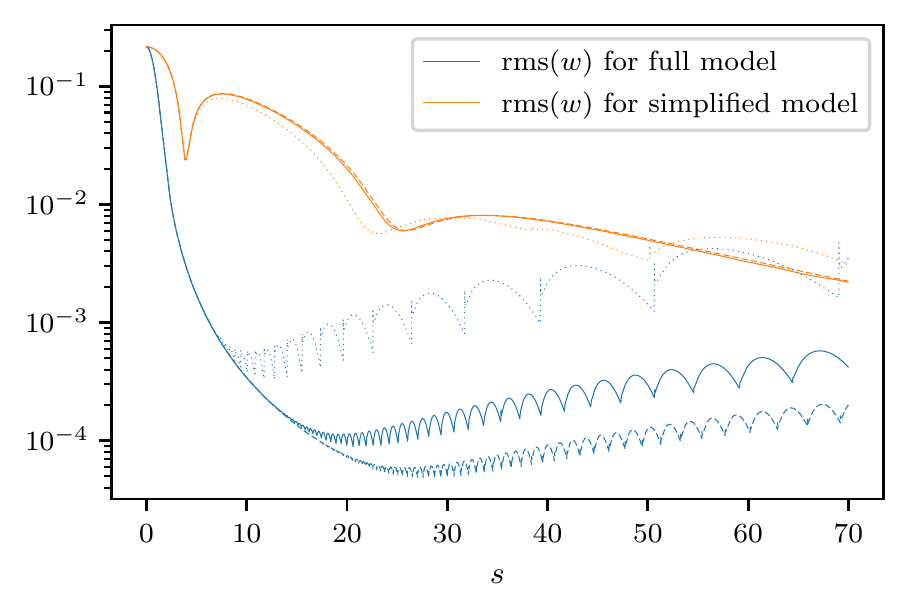}
\caption{Numerical verification of the convergence of the full and the
simplified HHMO-model to the self-similar profile $\Phi$.  Even though
the transients are different, the difference field $w$ converges to
zero in both cases.  Model parameters are $\alpha=\beta=1$ and
$u^*=0.2$.  The simulation shows that numerical artifacts at large
times decrease with improved resolution, where
$\Delta s = \Delta \eta = 10^{-2}$ (dotted lines),
$\Delta s = \Delta \eta = 10^{-3}$ (dashed lines), and
$\Delta s = \Delta \eta = 10^{-4}$ (solid lines).  See text for a
detailed discussion.}
\label{f.comparison}
\end{figure}

Numerical evidence suggests that solutions to the full HHMO-model
converge robustly to a steady state $\Phi(\eta)$ with respect to the
parabolic similarity variable $\eta = x/s$ as
$s = \sqrt{t} \to \infty$, see Figure~\ref{f.comparison} which is
explained in detail further below.  A proof of convergence to a steady
state is difficult for much the same reasons that well-posedness is
difficult, but \cite{DarbenasHO:2018:LongTA} were able to prove a
slightly weaker result: \emph{assuming} that the HHMO-solution
converges to a steady state $\Phi(\eta)$ at all, this steady state
must satisfy the differential equation
\begin{subequations}
  \label{e.v-selfsimilar}
\begin{gather}
  \Phi'' + \frac\eta2 \, \Phi' + \frac{\alpha\beta}2 \, \delta(\eta-\alpha)
  - \frac\gamma{\eta^2} \, H(\alpha-\eta) \, \Phi = 0 \,,
  \label{e.ode2} \\
  \Phi'(0) = 0 \,, \label{e.v-selfsimilar-b} \\
  \Phi(\eta) \to 0 \quad \text{as } \eta \to \infty \,,
  \label{e.v-selfsimilar-c} \\
  \Phi(\alpha) = u^* \,.
  \label{e.v-selfsimilar-d}
\end{gather}
\end{subequations}
In this formulation, $\gamma$ is an unknown constant.  To determine
$\gamma$ uniquely, this second order system has an additional internal
boundary condition \eqref{e.v-selfsimilar-d} which expresses that the
reactant concentration in the HHMO-model converge to the critical
value $u^*$ at the source point which, in similarity coordinates,
moves along the line $\eta=\alpha$.

There exists a unique solution $(\Phi,\gamma)$ to
\eqref{e.v-selfsimilar} with $\gamma>0$ and $\Phi$ given by
\begin{gather}
  \Phi(\eta)
  = \begin{dcases}
      \frac{u^*\,\eta^\kappa \,
            M \bigl(\frac\kappa2,\kappa+\frac12, -\frac{\eta^2}4 \bigr)}
           {\alpha^\kappa \, M \bigl(\frac\kappa2,\kappa+\frac12,
            -\frac{\alpha^2}4 \bigr)} & \text{ if }\eta<\alpha \,, \\
      \frac{u^*}{\erfc (\frac\alpha2)} \,
      \erfc \Bigl( \frac\eta2 \Bigr)
      & \text{ if }\eta\ge\alpha \,,
    \end{dcases}
  \label{e.phi.gamma}
\end{gather}
where $M$ is Kummer's confluent hypergeometric function
\cite{AbramowitzS:1972:HandbookMF}, $\kappa$ is a solution of the
algebraic equation 
\begin{equation}
  \label{eq.kappa.2}
  u^* = u^*_\gamma \equiv
  \left(\frac{\kappa \,
    M \bigl( \frac\kappa2+1,\kappa+\frac12, -\frac{\alpha^2}4 \bigr)}%
   {\alpha \,
    M \bigl( \frac\kappa2,\kappa+\frac12,-\frac{\alpha^2}4 \bigr)}
  + \, \frac{\exp \bigl(-\frac{\alpha^2}4 \bigr)}%
                {\sqrt\pi \, \erfc (\frac\alpha2 )}\right)^{-1}
  \frac{\alpha\beta}2 \,,
\end{equation}
and $\gamma = \kappa (\kappa-1)$, subject to the solvability
condition
\begin{equation}
  \label{e.solvability}
  u^* < u_0^* \,.
\end{equation}
In $x$-$t$ coordinates, the self-similar solution to
\eqref{e.v-selfsimilar} takes the form
\begin{equation}
  \label{e.phi.gamma.x-t}
  \phi(x,t)=\Phi(x/{\sqrt t}) \,.
\end{equation}
Throughout this paper, we assume that $\alpha$, $\beta$, and $u^*$
satisfy the solvability condition \eqref{e.solvability}, so that the
self-similar solution $\phi$ exists.  (For triples $\alpha$, $\beta$,
and $u^*$ which violate the solvability condition, the corresponding
weak solution precipitates in only a bounded region and the asymptotic
state is easy to determine explicitly; for details, see
\cite{DarbenasHO:2018:LongTA}.)

\begin{figure}
\centering
\includegraphics[width=\textwidth]{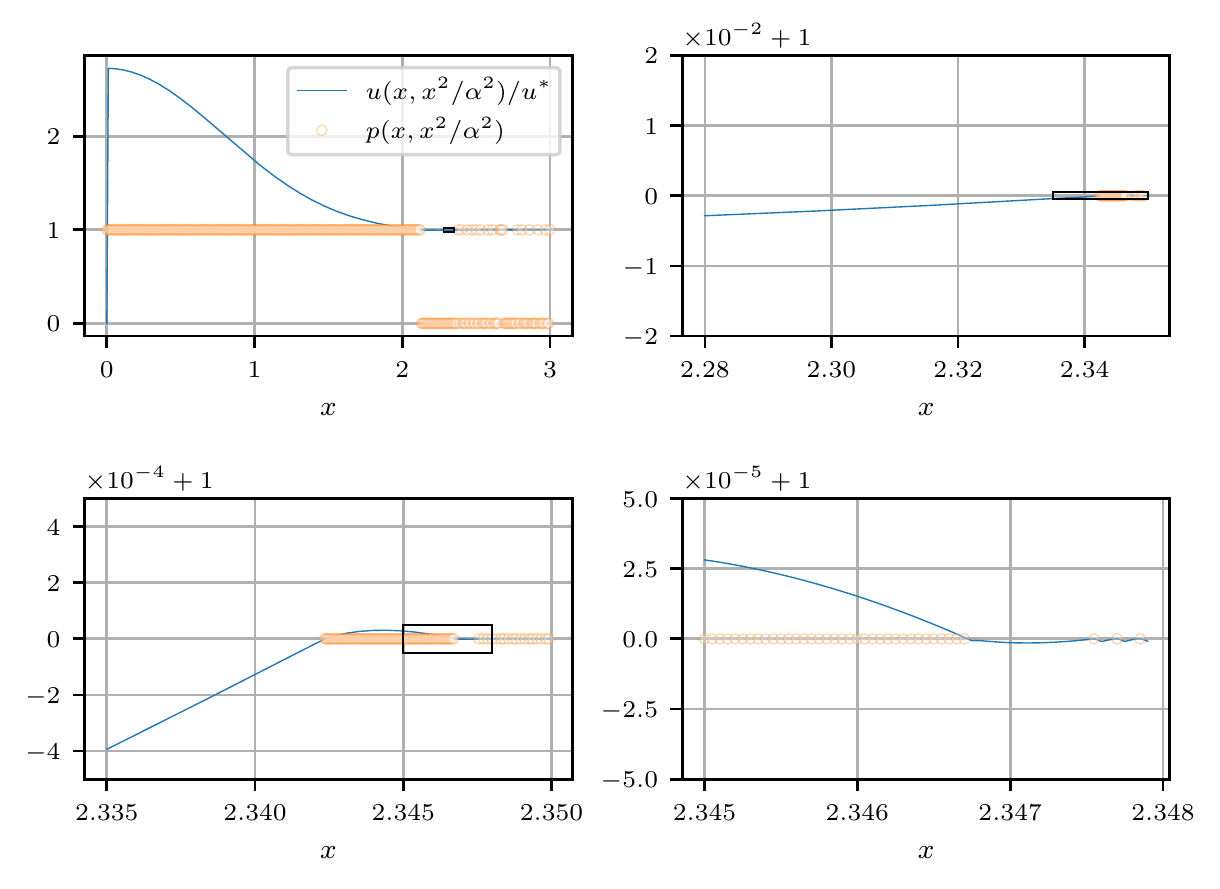}
\caption{Relative concentration $u/u^*$ and precipitation function $p$
along the parabola $t = x^2/\alpha^2$ for the full HHMO-model.  Each
subsequent graph zooms into the boxed area of the previous.  The
simulation parameters are $\alpha=\beta=1$, $u^*=0.2$,
$\Delta s = \Delta \eta = 5 \cdot 10^{-5}$.  Note that the $x$-scale
here coincides directly with similarity time $s=x/\alpha$ used in
Figure~\ref{f.comparison}.}
\label{f.omega-full}
\end{figure}

\begin{figure}
\centering
\includegraphics[width=\textwidth]{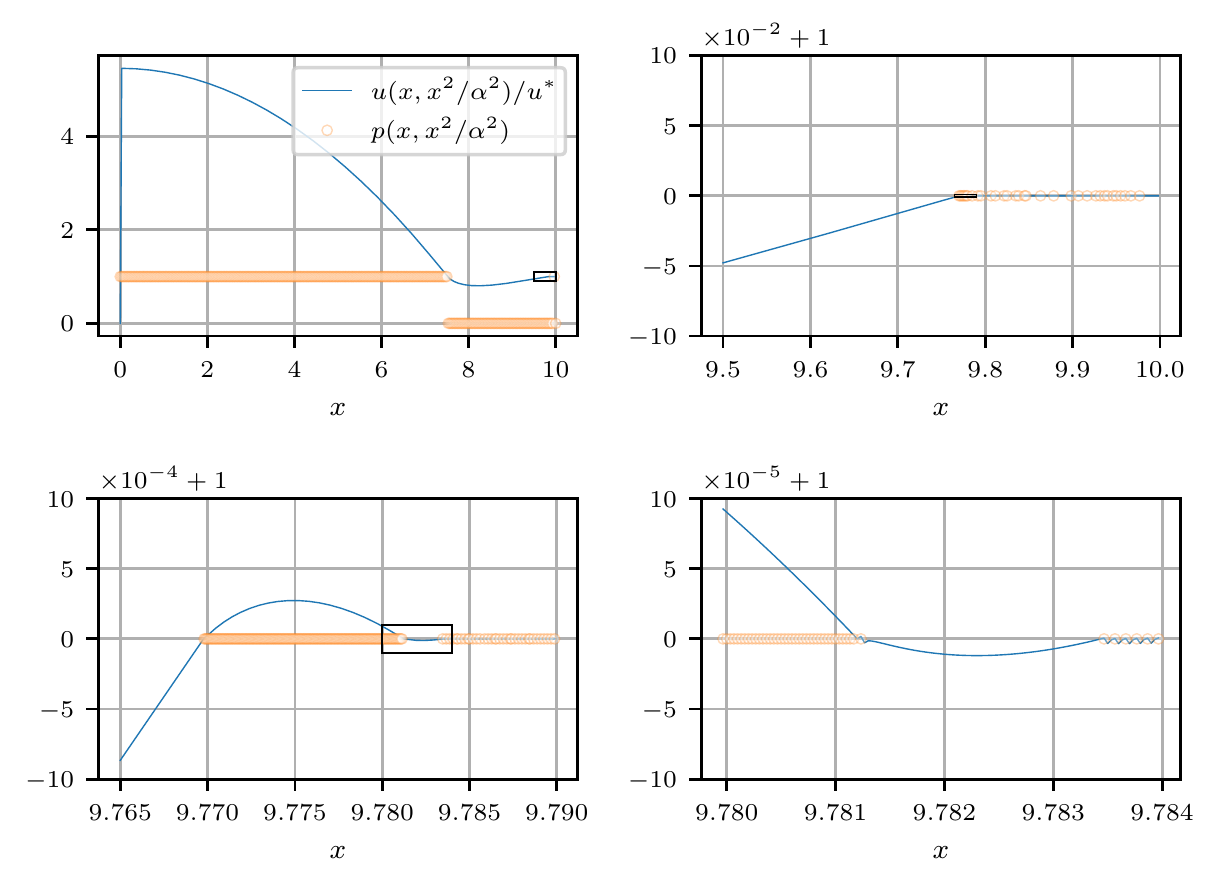}
\caption{Relative concentration $u/u^*$ and precipitation function $p$
along the parabola $t = x^2/\alpha^2$ for the simplified HHMO-model.
Each subsequent graph zooms into the boxed area of the previous.  The
simulation parameters are $\alpha=\beta=1$, $u^*=0.1$,
$\Delta s = \Delta \eta = 3.33 \cdot 10^{-5}$.  Note that the
$x$-scale here coincides directly with similarity time $s=x/\alpha$
used in Figure~\ref{f.comparison}.}
\label{f.omega-simplified}
\end{figure}

We now write $w=u-\phi$ to denote the difference between the solution
of the full HHMO-model \eqref{e.original} and the self-similar profile
\eqref{e.phi.gamma.x-t}.  Then $w$ solves the equation
\begin{subequations}
  \label{e.u-phi}
\begin{gather}
  w_t - w_{xx} + p \, w
  = \biggl( 
      \frac\gamma{x^2} \, H \Bigl( \alpha - \frac{x}{\sqrt t} \Bigr) 
      - p 
    \biggr) \, \phi \Bigl( \frac{x}{\sqrt t} \Bigr) \,, 
  \label{e.u-phi.a} \\
  w_x(0,t) = 0 \quad \text{for } t \geq 0 \,, \\
  w(x,0) = 0 \quad \text{for } x > 0 \,.
\end{gather}
\end{subequations}
To pass to the simplified HHMO-model, we make two changes to this
equation:
\begin{enumerate}[label={\upshape(\alph*)}]
\item\label{i.1} Precipitation is triggered on the condition
that $u>u^*$ on the line $x^2 = \alpha^2 t$, and
\item\label{i.2} the damping term $pw$ in \eqref{e.u-phi.a} is
neglected.
\end{enumerate}
The simplified model then reads
\begin{subequations}
  \label{e.simplified}
\begin{gather}
  w_t - w_{xx} 
  = \biggl( 
      \frac\gamma{x^2} - 
      H \Bigl(w\Bigl(x,\frac{x^2}{\alpha^2} \Bigr) \Bigr)
    \biggr) \, 
    H \Bigl( \alpha - \frac{x}{\sqrt t} \Bigr) \, \phi (x,t) \,, \\
  w_x(0,t) = 0 \quad \text{for } t \geq 0 \,, \\
  w(x,0) = 0 \quad \text{for } x > 0 \,.
\end{gather}
\end{subequations}

To illustrate the impact of these simplifications, we resort to
numerical simulation.  In the context of this problem, two general
comments about numerical simulation are necessary.

First, we are looking at the long-time behavior of the solution where
breakdown has already occurred early in the simulation.  For lack of a
clear alternative, numerical schemes are constructed under the
assumption of a binary precipitation function.  Thus, we cannot expect
pointwise convergence of the precipitation function; convergence of
the numerical precipitation function to the precipitation function of
a weak solution in the sense of \cite{HilhorstHM:2009:MathematicalSO}
can at best take place in the weak-$*$ topology.  The concentration,
on the other hand, may converge point-wise.  High-order convergence
cannot be expected in this setting so that we limit ourselves to the
lowest order finite difference approximations.

Second, the asymptotic profile is stationary in \emph{similarity}
variables, but a spatial grid cell of constant size in similarity
variables maps to a cell in physical space whose size increases in
time.  This leaves fundamentally two options:

\begin{enumerate}[label={\upshape(\roman*)}]
\item Compute on a fixed grid in physical space and accept that the
essential part of the solution will run out of the computational
domain in a finite time.

\item Compute on a fixed grid in similarity variables and accept that
the smallest unit of precipitation that can be represented by the
scheme will get coarser as time goes on, leading to spurious
oscillations with increasing amplitude at late times.
\end{enumerate}

It is conceivable that an $h$-$p$-adaptive method on an essentially
infinite domain in physical space could break this dichotomy, but
would also raise the issue of how much such a complex scheme can be
trusted, and how to adapt it to the essentially non-smooth nature of
this problem.  For these reasons, we choose not to take this route but
rather accept that accuracy is lost as time progresses.  The time of
validity can be extended by choosing a finer spatial grid in
similarity variables, or choosing a bigger domain in physical space.
For any given grid, however, the time of validity is finite.

For our code, we have chosen a finite difference scheme formulated in
similarity variables $\eta = x/\sqrt{t}$ and $s = \sqrt t$; it is
detailed in Appendix~\ref{a.numerics}.  The advantage is that it makes
the identification of the limit profile particularly easy; the
drawback is that it requires a transport scheme for the precipitation
function, making the code just slightly longer than a direct solve in
physical variables.

Figure~\ref{f.comparison} demonstrates that both the full HHMO-model
and the simplified model converge to the asymptotic profile $\Phi$,
i.e., $\lVert w(t) \rVert \to 0$ as $t \to \infty$.  As expected, due
to the lack of the linear damping term $pw$, the simplified model
takes longer to equilibrize, but the asymptotics remain unchanged.  We
note that the loss of accuracy with time, described above, is clearly
visible.  Increasing the numerical resolution moves the point of
visible onset of numerical error to larger times, but the behavior of
the scheme is fundamentally non-uniform in time.

Figures~\ref{f.omega-full} and~\ref{f.omega-simplified} show details
of the initial transient of the full and the simplified model,
respectively.  We see that even though the transients are
quantitatively different, the two models have the same qualitative
features: The amplitude of the variation of concentration about the
threshold concentration at the source point decreases extremely
rapidly, as does the width of the precipitation rings and gaps.  In
both simulations, we were able to clearly resolve two precipitation
rings and two gaps, where the last gap is only visible by zooming in
about five orders of magnitude.  We cannot determine whether there is
a third distinct ring; simulating this numerically would require at
least one order of magnitude more resolution in space, due to the
additional timestepping at least two orders of magnitude more in
computational expense.  In the following, we prove, for the simplified
model, that the ring structure must break down within a finite
interval; the simulations suggest that this interval is not
particularly large.

We have also observed that most of the quantitative change comes from
simplification \ref{i.2}.  Implementing simplification \ref{i.1}
without simplification \ref{i.2} yields a solution that is visually
indistinguishable from the solution to the full model.

Note that simplification \ref{i.1} implies that there is no
precipitation below the line $x^2 = \alpha^2 t$, even when $u>u^*$.
The advantage of this simplification is that onset of precipitation
now ceases to be a free boundary problem and follows parabolic
scaling.  A motivation for the validity of this simplification comes
from the following fact: it is proved in \cite{DarbenasHO:2018:LongTA}
that \emph{if} the solution to the full HHMO-model converges to a
parabolically self-similar profile as $t \to \infty$, then the
contribution to the HHMO-dynamics from precipitation below the
parabola $\alpha^2 \, t=x^2$ is asymptotically negligible.

Simplification \ref{i.2} is justified by the numerical observation
that the equation without the damping term $pw$ already converges to
the same profile, so that an additional linear damping toward the
equilibrium will not make a qualitative difference.  Moreover,
assuming that the HHMO-solution converges to equilibrium, $pw$ becomes
asymptotically small while the right hand side of \eqref{e.u-phi.a}
remains an order-one quantity.  It is very difficult, however, to
estimate the quantitative effect of \ref{i.1} and \ref{i.2} due to the
discontinuous reaction term and the free boundary of onset of
precipitation, so that a rigorous justification of these two steps
remains open.

To proceed, we extend the simplified HHMO-model \eqref{e.simplified} to
the entire real line by even reflection and abbreviate
\begin{equation}
  \rho(x) = \frac\gamma{x^2} - 
      H \Bigl(w\Bigl(x,\frac{x^2}{\alpha^2} \Bigr) \Bigr) \,.
\end{equation}
Proceeding formally, we apply the Duhamel principle---a detailed
justification of the Duhamel principle in context of weak solutions is
given in \cite{Darbenas:2018:PhDThesis}---then change the order of
integration and implement the change of variables $s=y^2/\zeta^2$, so
that
\begin{align}
  w(x,t) 
  & = \int_{-\alpha \sqrt t}^{\alpha \sqrt t} \int_{y^2/\alpha^2}^t 
        \HK(x-y,t-s) \, \phi (y,s) \, 
        \d s \, \rho(y) \, \d y 
      \notag \\
  & = 2 \int_{-\alpha \sqrt t}^{\alpha \sqrt t} 
      \int_{\lvert y \rvert/\sqrt t}^{\alpha} 
        \HK(x-y,t-y^2/\zeta^2) \, \frac{\Phi (\zeta)}{\zeta^3} \, 
        \d \zeta \, \rho(y) \, y^2 \, \d y 
  \label{e.fixed-point2}
\end{align}
where $\HK$ is the standard heat kernel
\begin{equation}
  \HK(x,t) = \begin{dcases}\frac1{\sqrt{4\pi t}} \,
                 \e^{-\tfrac{x^2}{4t}}&\text{if }t>0 \,,\\
		 0&\text{if }t\le0 \,.
	      \end{dcases}
\end{equation}

We are specifically interested in the solution on the parabola
$x^2=\alpha^2 \, t$.  For notational convenience, we assume in the
following that $x$ is nonnegative; solutions for negative $x$ are
obtained by even reflection.  Then, setting
$\omega(x) = w(x,x^2/\alpha^2)$ and inserting the fundamental solution
of the heat equation explicitly, we find
\begin{align}
  \omega(x) 
  & = \frac1{\sqrt \pi} \int_{-x}^{x} 
        \int_{\alpha \lvert y \rvert/x}^{\alpha}
        \frac1{\sqrt{\tfrac{x^2}{\alpha^2}-\tfrac{y^2}{\zeta^2}}} \,
        \exp \biggl(
               - \frac{(x-y)^2}%
                      {4 \, \bigl(
                              \tfrac{x^2}{\alpha^2}-\tfrac{y^2}{\zeta^2}
                            \bigr)}
             \biggr) \,
        \frac{\Phi (\zeta)}{\zeta^3} \, \d \zeta \, 
        \rho(y) \, y^2 \, \d y 
      \notag \\
  & = \frac1x\int_{-x}^{x} G \Bigl( \frac{y}x \Bigr) \, \rho(y) \, y^2 \, \d y  
      \notag \\
  & = x^2 
      \int_{-1}^1 G(\theta) \, \rho(x\theta) \, \theta^2 \, \d \theta \,,
  \label{e.omega1}
\end{align}
where
\begin{equation}
  \label{e.G}
  G(\theta) 
  = \frac\alpha{\sqrt \pi} \int_{\alpha\lvert\theta\rvert}^\alpha
    \frac1{\sqrt{\zeta^2-\alpha^2 \, \theta^2}} \,
        \exp \biggl(
               - \frac{\zeta^2 \, \alpha^2 \, (1-\theta)^2}%
                      {4 \, (\zeta^2 - \alpha^2 \, \theta^2)}
             \biggr) \,
        \frac{\Phi (\zeta)}{\zeta^2} \, \d \zeta \,.
\end{equation}
Inserting the explicit expression for $\rho$ into \eqref{e.omega1} and
noting that $\omega$ is extended to negative arguments by even
reflection, we obtain
\begin{equation}
  \label{omega.explicit}
  \omega(x) 
  = \Gamma - x^2 \int_{0}^1 
      K(\theta) \, H(\omega(x\theta)) \, \d \theta
\end{equation}
with 
\begin{equation}
  \Gamma = \gamma \int_{-1}^1 G(\theta) \, \d \theta 
  \label{e.Gamma}
\end{equation}
and 
\begin{equation}
\label{e.K.G}
  K(\theta) = \theta^2 \, (G(\theta) + G(-\theta)) \,.
\end{equation}
A graph of the kernel $K$ is shown in Figure~\ref{f.K}.  In
Appendix~\ref{kernel.prop}, we give a combination of analytic and
numerical evidence showing that this kernel satisfies properties
\ref{i.k1}--\ref{i.k3} stated in the introduction.

\begin{figure}
\centering
\includegraphics[width=0.8\textwidth]{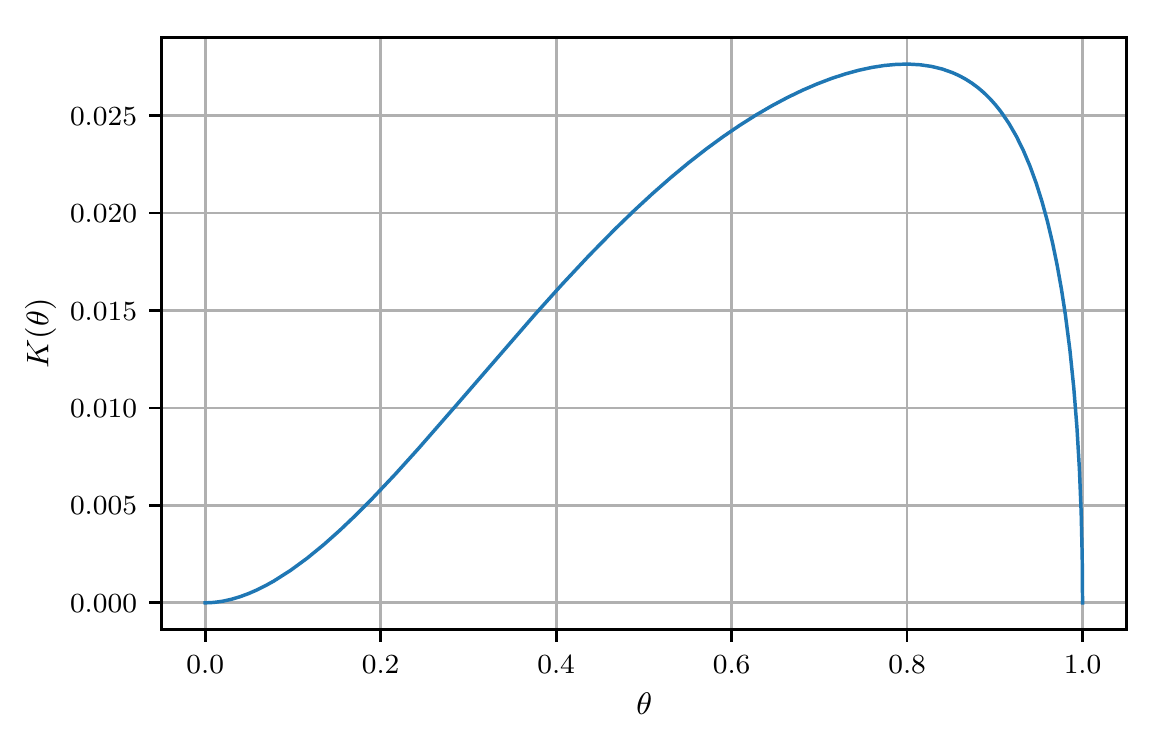}
\caption{Plot of the kernel $K(\theta)$ for $\alpha=\beta=1$ and
$u^*=0.15$.} 
\label{f.K}
\end{figure}

We conclude that the simplified HHMO-model implies an integral
equation of the form \eqref{omega.explicit.0}.  Vice versa, given a
solution $\omega$ to \eqref{omega.explicit}, we can reconstruct a
solution to the PDE-formulation of the simplified HHMO-model.  Indeed,
setting
\begin{align}
  W(x,t) 
  &= \int_0^t \int_{-\alpha \sqrt s}^{\alpha \sqrt s}
      \HK(x-y,t-s) \, \Bigl(\frac\gamma{y^2}-H(\omega(y))\Bigr) \, 
      \phi (y,s) \, \d y \, \d s \,,
  \label{e.reconstruction}
\end{align}
we can repeat the calculation leading to \eqref{e.omega1}, which
proves that $W(x,\alpha^{-2}x^2)=\omega(x)$.  Thus, $W$ solves
\eqref{e.fixed-point2} so that it provides a mild solution to
\eqref{e.simplified}.

\section{Non-degenerate breakdown of precipitation bands}
\label{s.nondeg}

In this section, we investigate the structure of solutions to the
integral equation \eqref{omega.explicit.0} for kernels $K$ which
satisfy assumptions \ref{i.k1}--\ref{i.k3}.  Specifically, we seek
solutions $\omega$ defined on a half-open interval $[0,x^*)$ or on
$[0,\infty)$ which change sign at isolated points $x_i$ for
$i=1,2,\dots$, ordered in increasing sequence.  Setting $x_0=0$ and
noting that precipitation must occur in a neighborhood of the origin
if it sets in at all, the precipitation bands are the intervals
$(x_i,x_{i+1})$ for even integers $i\geq 0$.  Hence,
\begin{equation}
  H(\omega(z)) 
  = \sum_{i \text{ even}} \I_{[x_i,x_{i+1}]}(z) \,,
\end{equation}
where we write $\I_A$ to denote the indicator function of a set $A$.
Thus, the one-dimensional precipitation equation
\eqref{omega.explicit.0} takes the form
\begin{align}
  \omega(x) 
  & = \Gamma 
      - x^2 \sum_{\substack{i \text{ even}\\ x_{i} < x}}
        \int_{x_i/x}^{\min\{x_{i+1}/x,1\}} K (\theta) \, \d \theta
      \notag \\
  & = \Gamma - \sum_{x_{i} < x} (-1)^i \, \rho_i(x) 
  \label{e.abstract-prec-eqn}
\end{align}
with
\begin{equation}
  \rho_i(x) =  x^2 \int_{x_i/x}^{1} K (\theta) \, \d \theta \,.
\end{equation}
For $x \geq x_{n-1}$, we also define the partial sums
\begin{equation}
  \omega_n(x) 
  = \Gamma - \sum_{i=0}^{n-1} (-1)^i \, \rho_i(x) \,.
  \label{e.partialsum}
\end{equation}
Thus, $\omega_n(x) = \omega(x)$ for $x \in [x_{n-1}, x_n]$.

With this notation in place, we are able to define the notion of
degenerate solutions.

\begin{definition} \label{d.degenerate}
A solution $\omega$ to \eqref{omega.explicit.0} is \emph{degenerate}
if \eqref{e.abstract-prec-eqn} holds up to some finite
$x_i \equiv x^*$ and it is not possible to apply this formula on
$[x^*,x^*+\eps)$ for any $\eps>0$; it is \emph{non-degenerate} if
$\omega$ possesses a finite or infinite sequence of isolated zeros
$\{x_i\}$ and the solution can be continued in the sense of
\eqref{e.abstract-prec-eqn} to some right neighborhood of any of its
zeros.
\end{definition}

In the remainder of this section, we characterize non-degenerate
solutions.  We cannot exclude that a solution is degenerate, i.e.,
that it cannot be continued at all beyond an isolated root; in fact,
Section~\ref{s.degenerate} shows that kernels with degenerate
solutions exist.  We note that a degenerate solution provides an
extreme scenario of a breakdown in which the solution reaches
equilibrium in finite time.  Thus, the main result of this section,
Theorem~\ref{th.shrink} below, can be understood as saying that even
when the solution is non-degenerate, it still fails to exist outside
of a bounded interval.

\begin{lemma} \label{l.infrings} 
Suppose $K \in C([0,1])$ is non-negative, strictly positive somewhere,
and $K(\theta) = o(\theta)$ as $\theta \to 0$.  Then a non-degenerate
solution to \eqref{omega.explicit.0} has an infinite number of
precipitation rings.
\end{lemma}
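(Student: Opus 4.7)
The plan is a proof by contradiction. Suppose $\omega$ is a non-degenerate solution with only finitely many sign-change points $x_1 < \cdots < x_n$, where the case $n=0$ (no sign changes past the origin) is allowed. Past $x_n$, $\omega$ has a definite sign---positive if $n$ is even, negative if $n$ is odd---and since non-degeneracy rules out being trapped at a zero while the right-hand side of \eqref{omega.explicit.0} is uniformly bounded away from blowup, $\omega$ extends to all of $[0,\infty)$ and retains that fixed sign on $(x_n,\infty)$. A contradiction will come from the asymptotic behaviour as $x\to\infty$ in \eqref{omega.explicit.0}.

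In the positive case, $H(\omega(x\theta))=1$ at least for $\theta\in(x_n/x,1)$, so
\begin{equation*}
  \omega(x) \le \Gamma - x^2 \int_{x_n/x}^1 K(\theta)\,\d\theta.
\end{equation*}
Since $K\ge 0$ and is strictly positive somewhere, $\int_0^1 K > 0$, and monotone convergence forces the right-hand side to $-\infty$, contradicting positivity.

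In the negative case, $H(\omega(x\theta))=0$ for $\theta>x_n/x$, so
\begin{equation*}
  \omega(x) \ge \Gamma - x^2 \int_0^{x_n/x} K(\theta)\,\d\theta.
\end{equation*}
The hypothesis $K(\theta)=o(\theta)$ at the origin implies that, given any $\eps>0$, one has $K(\theta)\le\eps\,\theta$ on $[0,x_n/x]$ for all sufficiently large $x$, whence $x^2\int_0^{x_n/x}K(\theta)\,\d\theta\le\tfrac12\eps\,x_n^2$. Choosing $\eps<2\Gamma/x_n^2$ (or arbitrary $\eps$ if $n=0$) makes $\omega(x)$ strictly positive for all large $x$, contradicting negativity.

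The decisive step is the negative case, where the assumption $K(\theta)=o(\theta)$ near the origin is precisely what ensures that the cumulative contribution from the previous rings, rescaled into the shrinking window $[0,x_n/x]$, remains negligible as $x\to\infty$. Without this decay, a kernel with heavier mass near $0$ could in principle balance the constant $\Gamma$ in the limit and spoil the contradiction; the positive case, by contrast, only uses that $K$ is non-negative and not identically zero.
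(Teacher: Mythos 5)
Your proof is correct and follows essentially the same route as the paper's: assume a largest sign change $x_n$, then in the positive case bound $\omega(x)\le\Gamma-x^2\int_{x_n/x}^1 K$, which tends to $-\infty$, and in the negative case use $K(\theta)=o(\theta)$ to show $x^2\int_0^{x_n/x}K\le\tfrac12\eps x_n^2$, forcing $\omega(x)>0$ for large $x$. The only cosmetic difference is that you bound the Heaviside factor directly rather than invoking the monotonicity $\rho_{i+1}<\rho_i$ of the partial-sum terms, and you explicitly include the case $n=0$.
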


\begin{proof}
A non-degenerate solution, by definition, is a solution that can be
extended to the right in some neighborhood of any of its zeros.  Now
suppose there is a largest zero $x_{n}$.  Then $\omega$ is
well-defined and equals $\omega_{n+1}$ on $[x_n,\infty)$.  Now let
$x>x_n$ and consider the limit $x \to \infty$.  When $n$ is even,
since $\rho_{i+1}(x) < \rho_i(x)$,
\begin{equation}
  0 < \omega(x) < \Gamma - \rho_n(x) \to -\infty \,,
\end{equation}
a contradiction.  When $n$ is odd,
\begin{align}
  0 > \omega(x) 
  & > \Gamma - x^2 \int_0^{x_n/x} K(\theta) \, \d \theta 
      \notag \\
  & > \Gamma - x^2 \, 
      \sup_{\theta \in [0,x_{n}/x]} \frac{K(\theta)}{\theta}
      \int_0^{x_{n}/x} \theta \, \d\theta
      \notag \\
  & = \Gamma - \frac12 \, x_{n}^2 \, 
      \sup_{\theta\in[0,x_{n}/x]} \frac{K(\theta)}{\theta} 
      \to \Gamma > 0 \,,
\end{align}
once again a contradiction.
\end{proof}

\begin{lemma} \label{rings.schrinking}
Suppose $K \in C([0,1]) \cap C^1([0,1))$ with $K(1)=0$ and
$K'(\theta) \to -\infty$ as $\theta \to 1$.  Let $\omega$ be a
non-degenerate solution to \eqref{omega.explicit.0} with an infinite
number of precipitation rings.  Then $x_{2n}/x_{2n+1} \to 1$ as
$n\to\infty$.  Moreover, $x_{2n+1}-x_{2n}$, the width of the $n$th
precipitation ring, converges to zero.
\end{lemma}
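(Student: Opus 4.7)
The plan is to split on whether the sequence $\{x_n\}$ of zeros is bounded. Since the zeros are strictly increasing by construction, either $x_n$ converges to some finite $x^*$, in which case both $x_{2n}$ and $x_{2n+1}$ converge to the common limit $x^*$ and both conclusions---$x_{2n}/x_{2n+1} \to 1$ and $x_{2n+1}-x_{2n} \to 0$---are immediate, or $x_n \to \infty$, which is the substantive case.

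In the unbounded case, the starting point would be the identity $\omega(x_{2n+1}) = 0$, which via \eqref{omega.explicit.0} reads
\[
  \Gamma
  = x_{2n+1}^2 \int_0^1 K(\theta) \, H(\omega(x_{2n+1}\theta)) \, \d\theta.
\]
Writing $\tau_n := x_{2n}/x_{2n+1}$ and exploiting that $[x_{2n},x_{2n+1}]$ is a ring, so $H(\omega(x_{2n+1}\theta)) = 1$ for $\theta \in [\tau_n, 1]$, while all remaining contributions are non-negative (the kernel being positive near $1$ is forced by $K(1) = 0$ together with $K'(\theta) \to -\infty$; globally $K \geq 0$ is part of the paper's standing assumptions), one obtains
\[
  \int_{\tau_n}^1 K(\theta) \, \d\theta \leq \frac{\Gamma}{x_{2n+1}^2} \longrightarrow 0,
\]
and since the integrand is strictly positive just to the left of $1$, this forces $\tau_n \to 1$.

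For the width, I would convert the hypothesis $K'(\theta) \to -\infty$ into a quantitative lower bound on the same integral. Given any $M > 0$, choose $\delta_M \in (0,1)$ with $K'(s) < -M$ on $(1-\delta_M, 1)$; integrating from $\theta \in (1-\delta_M, 1)$ to $1$ and using $K(1) = 0$ yields $K(\theta) > M(1-\theta)$. For $n$ large enough that $\tau_n > 1-\delta_M$---possible by the first step---this gives
\[
  \int_{\tau_n}^1 K(\theta) \, \d\theta
  \geq \int_{\tau_n}^1 M(1-\theta) \, \d\theta
  = \frac{M(1-\tau_n)^2}{2}.
\]
Combining with the upper bound yields $(x_{2n+1}-x_{2n})^2 = x_{2n+1}^2(1-\tau_n)^2 \leq 2\Gamma/M$; since $M > 0$ is arbitrary, $x_{2n+1}-x_{2n} \to 0$.

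The only real subtlety is the ordering of the two steps: the quantitative lower bound on $\int_{\tau_n}^1 K$ applies only once $\tau_n$ has entered the regime $(1-\delta_M,1)$, so the first claim must be in hand before the second can be made. Notably, no analysis of $\omega$ in the interior of the ring is required---the proof uses only the boundary value $\omega(x_{2n+1}) = 0$ together with $H(\omega) = 1$ on the ring itself---so the argument reduces to an elementary kernel estimate in which the $K'$-blow-up plays the decisive role.
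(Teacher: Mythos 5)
Your proposal is correct and follows essentially the same route as the paper: both parts hinge on the single inequality $\Gamma \geq x_{2n+1}^2\int_{x_{2n}/x_{2n+1}}^1 K(\theta)\,\d\theta$ obtained from $\omega(x_{2n+1})=0$ by discarding the non-negative contributions of the earlier rings, with the ratio claim following from positivity of $K$ near $1$ and the width claim from converting $K'\to-\infty$ into a lower bound of order $M(1-\tau_n)^2$ for that integral. The only difference is cosmetic: the paper extracts $-\tfrac12 K'(\zeta_n)(x_{2n+1}-x_{2n})^2$ via Fubini and the mean value theorem of integration, whereas you use the pointwise bound $K(\theta)>M(1-\theta)$ — the same estimate in slightly more elementary packaging.
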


\begin{proof}
When the sequence $\{x_i\}$ is bounded, the claim is obvious.  Thus,
assume that this sequence is unbounded.  Since $K'$ is negative on
$(1-\eps,1)$ for sufficiently small $\eps>0$, $K$ is positive on this
interval.  As in the proof of Lemma~\ref{l.infrings},
\begin{equation}
\label{r.eq}
  0 \equiv \omega(x_{2n+1}) 
  < \Gamma - \rho_{2n}(x_{2n+1}) 
  = \Gamma 
    - x_{2n+1}^2 \int_{x_{2n}/x_{2n+1}}^1 K(\theta) \, \d \theta \,.
\end{equation}
We can directly conclude that $x_{2n}/x_{2n+1} \to 1$ as
$n \to \infty$.  Further, noting that $K(1)=0$ and using the
fundamental theorem of calculus, we obtain
\begin{align}
  \Gamma 
  & > x_{2n+1}^2 
      \int_{x_{2n}/x_{2n+1}}^1 (K (\theta) - K(1)) \, \d \theta 
      \notag \\
  & = - x_{2n+1}^2 \int_{x_{2n}/x_{2n+1}}^1
        \int_\theta^1 K'(\zeta) \, \d \zeta \, \d \theta
      \notag \\
  & = - x_{2n+1}^2 \int_{x_{2n}/x_{2n+1}}^1 K'(\zeta) 
        \int_{x_{2n}/x_{2n+1}}^\zeta \d \theta \, \d \zeta 
      \notag \\
  & = - \frac12 \, K'(\zeta_n) \, (x_{2n+1}-x_{2n})^2   
\end{align}
where, by the mean value theorem of integration, the last equality
holds for some $\zeta_n \in [x_{2n}/x_{2n+1},1]$.  Since
$\zeta_n \to 1$, we conclude that $x_{2n+1}-x_{2n} \to0$ as
$n \to \infty$.
\end{proof}

\begin{theorem} \label{th.shrink}
Suppose $K \in C([0,1])$, differentiable with absolutely continuous
first derivative on $[0,z]$ for every $z\in(0,1)$, and unimodal, i.e.,
there exists $\theta^* \in (0,1)$ such that $K''(\theta)>0$ for
$\theta \in (0,\theta^*)$ and $K''(\theta)<0$ for
$\theta \in (\theta^*,1)$, and that
$K(\theta) \sim k \, (1-\theta)^\sigma$ for some $k>0$ and
$\sigma \in (0, \log_2 3 -1)$ as $\theta \to 1$. Further, assume that
equation \eqref{omega.explicit.0} has a non-degenerate solution
$\omega$ with an infinite number of precipitation rings. Then its
zeros have a finite accumulation point.
\end{theorem}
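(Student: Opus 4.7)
I would argue by contradiction. Assume the increasing sequence $\{x_n\}$ has no finite accumulation point, so that $x_n \to \infty$.

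\emph{Step 1: quantitative band-width estimate.} The argument in the proof of Lemma~\ref{rings.schrinking}, with the kernel asymptotic $K(\theta) \sim k\,(1-\theta)^\sigma$ inserted explicitly, refines to
\[
  \Gamma
  \;\ge\; x_{2n+1}^2 \int_{x_{2n}/x_{2n+1}}^1 K(\theta)\,\d\theta
  \;\sim\; \frac{k}{\sigma+1}\,x_{2n+1}^{1-\sigma}\,r_n^{\sigma+1},
\]
which yields the sharp algebraic rate $r_n \lesssim x_{2n+1}^{(\sigma-1)/(\sigma+1)}$ for the band widths $r_n = x_{2n+1}-x_{2n}$.

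\emph{Step 2: gap-width estimate.} For the gap widths $g_n = x_{2n+2}-x_{2n+1}$, the key ingredient is that, because $K(1)=0$ cancels the jumps coming from the discontinuity of $H(\omega)$, the function $\omega$ is in $C^1$ at every zero. Differentiating the partial-sum representation \eqref{e.partialsum} in the gap gives
\[
  \omega'(x_{2n+1})
  = -\frac{2\Gamma}{x_{2n+1}}
    - \sum_{j=0}^n \bigl[K(x_{2j}/x_{2n+1})\,x_{2j}
                         - K(x_{2j+1}/x_{2n+1})\,x_{2j+1}\bigr],
\]
whose leading contribution is the $j=n$ term $-K(x_{2n}/x_{2n+1})\,x_{2n}$, of magnitude $k\,(r_n/x_{2n+1})^\sigma\,x_{2n+1}$. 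Combining this with the return condition $\omega(x_{2n+2})=0$ and an analysis of the (possibly singular) second derivative of $\omega$ near $x_{2n+1}^+$, which uses unimodality of $K$ and the absolute continuity of $K'$ away from $1$, yields a bound $g_n \lesssim x_{2n+1}^{(\sigma-1)/(\sigma+1)}$ comparable to the band width bound.

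\emph{Step 3: summability via the threshold $\sigma<\log_2 3-1$.} The bounds from Steps 1 and 2 give the raw estimate $x_{2n+3}-x_{2n+1} \lesssim x_{2n+1}^{(\sigma-1)/(\sigma+1)}$. This alone is not enough, since it is consistent with the algebraic growth $x_n \sim n^{(\sigma+1)/2}$. A bootstrap is therefore needed. Comparing the integral equation at two successive odd zeros $x_{2n+1}$ and $x_{2n+3}$ and inserting the asymptotic form of $K$ near $\theta=1$, the condition $2^{\sigma+1}<3$---equivalent to $\sigma<\log_2 3-1$---translates into a strict sub-tripling estimate: the combined contribution of the two most recent bands to the constraint $\omega(x_{2n+3})=0$ is strictly less than three times the single-band contribution at $x_{2n+1}$. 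Iterating this sub-tripling closes the bootstrap, forces $\sum_n (x_{n+1}-x_n) < \infty$, and contradicts $x_n \to \infty$.

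The chief technical obstacle is Step 3: controlling uniformly the influence of all older bands through the bootstrap. The specific threshold $\log_2 3-1$ is dictated by the balance between kernel decay near $\theta=1$ and the rate at which the contributions of older bands accumulate; beyond this threshold, the sub-tripling estimate required to close the bootstrap fails.
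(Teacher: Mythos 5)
Your Steps 1 and 2 contain correct observations (the refined band-width estimate from the kernel asymptotics, and the fact that $K(1)=0$ makes $\omega$ continuously differentiable across its zeros), but the proof does not close. By your own admission in Step 3, the size bounds $x_{n+1}-x_n\lesssim x_{2n+1}^{(\sigma-1)/(\sigma+1)}$ are consistent with $x_n\to\infty$, so the entire burden falls on the ``sub-tripling bootstrap'' --- and that step is asserted, not proved. You never state what the sub-tripling inequality is, why $2^{1+\sigma}<3$ implies it, or why iterating it yields $\sum_n(x_{n+1}-x_n)<\infty$; a bound saying that two bands contribute less than three times one band does not by itself produce geometric decay of the increments. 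Likewise, the ``analysis of the (possibly singular) second derivative'' in Step 2 is precisely the hard technical core of the problem, and it is deferred rather than carried out.

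The missing idea is a \emph{three-point} comparison rather than absolute size bounds. The paper applies the second-order mean value theorem \eqref{f.second.der} to the partial sum $\omega_n$ of \eqref{e.partialsum} at the nodes $x_n<x_{n+1}<x\le x_{n+2}$; the unimodality of $K$ enters through $F(z)=z^2K'(z)-2zK(z)-2\int_z^1K(\theta)\,\d\theta$, whose monotonicity structure bounds the alternating sum $\omega_n''(y)=\sum(-1)^iF(x_i/y)$ uniformly by a constant $M$, giving \eqref{e.main-ineq}. Rescaling by $q=(x-x_{n+1})/(x_{n+1}-x_n)$ and using $K(\theta)\sim k(1-\theta)^\sigma$ turns this into $G(q_n)>o(1)$ with $G(q)=(1+q)^{1+\sigma}-q^{1+\sigma}-q-1$ as in \eqref{e.Gdef}. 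This is where your constant appears: $G(1)=2^{1+\sigma}-3<0$ exactly when $\sigma<\log_2 3-1$, so $G$ has a unique positive root $q^*<1$ and one concludes $\limsup_n q_n\le q^*<1$, i.e.\ a \emph{ratio} bound on consecutive internodal distances, whence geometric summability. Your proposal never produces a ratio bound of this kind, and without it the contradiction with $x_n\to\infty$ is not reached. I would therefore regard the argument as incomplete at its decisive step.
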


\begin{proof}
We begin by recalling the second order mean value theorem, which
states that for a twice continuously differentiable function $f$ and
nodes $a<b<c$ there exists $y \in [a,c]$ such that
\begin{equation}
  \frac{f(c)-f(b)}{c-b} - \frac{f(b)-f(a)}{b-a}
  = \frac{c-a}2 \, f''(y) \,.
  \label{f.second.der}
\end{equation}
We apply this result to the partial sum function $\omega_n$ with
$a=x_n$, $b=x_{n+1}$, and $c=x \in (x_{n+1},x_{n+2}]$.  We note that
$\omega_n(x_n)=0$.  Further, subtracting \eqref{e.partialsum} from
\eqref{e.abstract-prec-eqn}, we obtain, for $x \in [x_{n+1},x_{n+2}]$,
that
\begin{equation}
  \omega_n(x) = \omega(x) + (-1)^n \, (\rho_{n}(x) - \rho_{n+1}(x)) 
  \label{e.omegax}
\end{equation}
so that, in particular, $\omega_n(x_{n+1}) = (-1)^{n} \,
\rho_{n}(x_{n+1})$.  Equation \eqref{f.second.der} then reads
\begin{equation}
  \frac{\omega(x) + 
        (-1)^n \, (\rho_{n}(x) - \rho_{n+1}(x) - \rho_n(x_{n+1}))}%
       {x-x_{n+1}}
  - \frac{(-1)^{n} \, \rho_{n}(x_{n+1})}{x_{n+1}-x_n}
  = \frac{x-x_n}2 \, \omega_n''(y)
  \label{e.second.der2}
\end{equation}
for some $y \in [x_n,x]$.

To estimate
the right hand expression, we compute
\begin{equation}
  \omega_n''(y)
  = \sum_{i=0}^{n-1} (-1)^i \, F \Bigl(\frac{x_i}{y} \Bigr)
  \equiv \sum_{i=0}^{n-1} (-1)^i \, f_i 
\end{equation}
where
\begin{equation}
\label{def.F}
  F(z) = z^2 \, K'(z) - 2 \, z \, K(z) -
         2 \int^1_z K(\theta) \, \d\theta \,.
\end{equation}
By direct computation, $F'(z) = z^2 \, K''(z)$.  Since $K$ is
unimodal, this implies that $F$ has an isolated maximum on $[0,1]$.

Now suppose that $x \in (x_{n+1},x_{n+2})$.  We consider two separate
cases.  When $n$ is even, for every $y \geq x_{n-1}$ there exists a
unique odd index $\ell$ such that the sequence of $f_i$ is strictly
increasing for $i = 1, \dots, \ell-1$ and is strictly decreasing for
$i = \ell+1, \dots, n-1$.  Hence,
\begin{align}
  \omega_n''(y) 
  & = f_0 + (-f_1 + f_2) + \dots 
      - f_\ell + (f_{\ell+1} - f_{\ell+2}) + \dots 
      + (f_{n-2} - f_{n-1})
      \notag \\
  & > f_0 - f_\ell
      \geq F (0) - \max_{z \in [0,1]} F(z)
      \equiv -M \,,
\end{align}
where $M$ is a strictly positive constant.  Further, $\omega(x)<0$.
Inserting these two estimates into \eqref{e.second.der2}, we obtain
\begin{equation}
  \frac{\rho_{n}(x) - \rho_{n+1}(x) - \rho_n(x_{n+1})}%
       {x-x_{n+1}}
  - \frac{\rho_{n}(x_{n+1})}{x_{n+1}-x_n}
  > - \frac{x-x_n}2 \, M \,.
  \label{e.main-ineq}
\end{equation}

When $n$ is odd, for every $y \geq x_{n-1}$ there exists a unique even
index $\ell$ such that the sequence of $f_i$ is strictly increasing
for $i = 0, \dots, \ell-1$ and is strictly decreasing for $i = \ell+1,
\dots, n-1$.  Hence,
\begin{align}
  \omega_n''(y) 
  & = (f_0 - f_1) + \dots 
      + f_\ell + (-f_{\ell+1} + f_{\ell+2}) + \dots 
      + (- f_{n-2} + f_{n-1})
      \notag \\
  & < f_\ell
    \leq \max_{z \in [0,1]} F(z)
    = M + F (0) < M
\end{align}
Further, $\omega(x)>0$.  As before, inserting these two estimates into
\eqref{e.second.der2}, we obtain
\begin{equation}
  - \frac{\rho_{n}(x) - \rho_{n+1}(x) - \rho_n(x_{n+1})}%
       {x-x_{n+1}}
  + \frac{\rho_{n}(x_{n+1})}{x_{n+1}-x_n}
  < \frac{x-x_n}2 \, M \,.
  \label{e.main-ineq2}
\end{equation}
Thus, we again obtain an estimate of exactly the form
\eqref{e.main-ineq} and we do not need to further distinguish between
$n$ even or odd.

To proceed, we define
\begin{equation}
  R(\theta) = \dfrac{\int_\theta^1 K(\zeta) \, \d \zeta}%
                    {k \int_\theta^1 \left(1-\zeta\right)^\sigma \, \d \zeta}
\end{equation}
so that, by assumption, $R(\theta) \to 1$ as $\theta \to 1$.  Further,
\begin{equation}
  \rho_i(x) = \frac{k}{1+\sigma} \, x^2 \,
              \Bigl( 1 - \frac{x_i}x \Bigr)^{1+\sigma} \,
              R \Bigl( \frac{x_i}x \Bigr) \,.
\end{equation}
Changing variables to
\begin{equation}
  d_n = x_{n+1} - x_n \,, \quad
  r_n = \frac{x_n}{x_{n+1}} \,, \quad \text{and} \quad
  q = \frac{x-x_{n+1}}{d_n} \,,
\end{equation}
we write inequality \eqref{e.main-ineq} in the form
\begin{equation}
  G(q) 
  > - S_0 (r_n,q)
  + S_1(r_n,q) \, (1+q)^{1+\sigma}
  - S_2(r_n,q) \, q^{1+\sigma}
  - S_3(r_n,q) \, (q+1)
  \label{e.G-ineq}
\end{equation}
where
\begin{subequations}
\begin{gather}
  S_0 (r,q) 
  = -q \, (q+1) \, \biggl( \frac{1-r}{1+(1-r)q} \biggr)^{1-\sigma} \, 
    \frac{M \, (1+\sigma)}{2k} \,, \\
  S_1 (r,q) 
  = 1 - R \biggl( \frac{r}{1+(1-r)q} \biggr) \,, \\
  S_2 (r,q) 
  = 1 - R \biggl( \frac1{1+(1-r)q} \biggr) \,, \\
  S_3 (r,q) 
  = 1 - \biggl( \frac1{1+(1-r)q} \biggr)^{1-\sigma} \, R(r) \,,
\end{gather}
and
\begin{gather}
  G(q) = (1+q)^{1+\sigma} - q^{1+\sigma} - q - 1 \,.
  \label{e.Gdef}
\end{gather}
\end{subequations}
Observe that $G(0)=0$, $G'(0)>0$, $G''(q)<0$ for all $q>0$, and
$G(1)=2^{1+\sigma}-3<0$.  Hence, there is a unique root
$q^* \in (0,1)$ such that $G(q)<0$ for all $q>q^*$.  Now fix
$\varepsilon>0$, define
\begin{gather}
  q_n = \frac{x_{n+2}-x_{n+1}}{x_{n+1}-x_n} \,,
\end{gather}
and consider any even index $j$ for which $q_j > q^* + \varepsilon$.
Since \eqref{e.G-ineq} was derived under the assumption
$x \in (x_{j+1},x_{j+2})$, or equivalently $q\in(0,q_j)$, this
inequality must hold for each tuple $(r_j, q^* + \varepsilon)$.  Now
if there were an infinite set of indices for which
$q_j > q^* + \varepsilon$, we could pass to the limit $j \to \infty$
on the subsequence of such indices.  Since $K''(\theta)<0$ for
$\theta\in(\theta^*,1)$ and $K(\theta) \sim k \, (1-\theta)^\sigma$ as
$\theta \to 1$, Lemma~\ref{rings.schrinking} is applicable and implies
that $r_{2k}=x_{2k}/x_{2k+1}\to1$.  As for any fixed $q$, each of the
$S_i(r,q)$ converges to zero as $r\to 1$, we arrive at the
contradiction $G(q^* + \varepsilon)>0$.  Hence,
\begin{equation}
  \limsup_{\substack{k \to \infty\\ k \text{ even}}} q_k 
  \le q^* < 1 \,. 
  \label{e.limsup}
\end{equation}

To extend this result to odd $n$, we note that
\begin{gather}
  r_{n+1} 
  = \frac{x_{n+1}}{x_{n+2}}
  = \frac1{1+q_n(1-r_n)}
  = \frac{(1-r_n)(1-r_n q_n)}{1+q_n(1-r_n)} + r_n
  > r_n 
\end{gather}
for all large enough even $n$.  This implies an even stricter bound on
the right hand side of \eqref{e.G-ineq} when $n$ is replaced by $n+1$,
so that \eqref{e.limsup} holds on the subsequence of odd integers as
well.

Altogether, this proves that the sequence of internodal distances
$d_n = x_{n+1}-x_n$ is geometric, thus the $x_n$ have a finite limit.
\end{proof}

\begin{remark}
Note that in the proof of Theorem~\ref{th.shrink}, we only need a
result which is weaker than the statement of
Lemma~\ref{rings.schrinking}, namely that $r_n \to 1$ for even $n$
going to infinity
\end{remark}

\begin{remark}
Note that the argument yields an explicit upper bound for $q_n$,
namely
\begin{equation}
  \limsup_{n\to \infty} q_n \le q^* < 1 
\end{equation}
where, as in the proof, $q^*$ is the unique positive root of $G$,
which is defined in \eqref{e.Gdef}.
\end{remark}

\begin{remark}
\label{r.shrink-generalization}
It is possible to relax the unimodality condition in the statement of
Theorem~\ref{th.shrink}.  In fact, it suffices that
$\lim_{\theta\nearrow1}K''(\theta)=-\infty$.  Indeed, assume that
$K''$ is defined on $U_K$. Take any $\theta^\star\in(0,1)$ such that
$K''<0$ on $[\theta^\star,1)\cap U_K$.  Changing variables in
\eqref{e.partialsum}, we obtain
\begin{equation}
  \omega_n(x)
  = \Gamma-x\int_0^x \I_{[0,x_n]}(y) \,
    K \Bigl( \frac yx \Bigr) \, H(\omega(y)) \,\d y \,.
\end{equation}
When $n$ is even and $x\in(x_{n+1},x_{n+2})$, the singularity of $K'$
and $K''$ at $\theta=1$ is separated from the domain of integration.
We can therefore differentiate under the integral, so that
\begin{align}
  \omega_n'(x)
    = \frac1x\int_0^x \I_{[0,x_n]}(y) \, K' \Bigl( \frac yx \Bigr) \,
        y \, H(\omega(y)) \, \d y
      - \int_0^x\I_{[0,x_n]}(y) \, K \Bigl( \frac yx \Bigr) \,
        H(\omega(y)) \, \d y
\end{align}
and
\begin{align}
  \omega_n''(x)
  & = - \frac1{x^3}\int_0^x \, \I_{[0,x_n]}(y) \,
        K'' \Bigl( \frac yx \Bigr) \, y^2 \, H(\omega(y)) \, \d y
      \notag \\
  & = - \int_0^1 \I_{[0,x_n/x]}(\theta) \, K''(\theta) \, \theta^2 \, 
        H(\omega(\theta x)) \, \d \theta  
      \notag \\
  & \ge  -\int_0^{\theta^\star} \lvert K''(\theta) \rvert \,
        \theta^2 \, \d\theta >-\infty \,.
\end{align}

When $n$ is odd, then for every $x\in(x_{n+1},x_{x+2})$ there is an
even index $\ell$ such that $x_{\ell-1}<x\theta^\star\le x_{\ell+1}$
(with the provision that $x_{-1}=0$), and
\begin{equation}
  \omega_n''(x)
  = - x^{-3} \int_0^{x_{\ell-1}} K'' \Bigl( \frac yx \Bigr) \, y^2 \,
      H(\omega(y)) \, \d y
    + \sum_{i=\ell}^{n-1}(-1)^i \, F \Bigl( \frac{x_i}x \Bigr) \,,
\end{equation}
where $F$ is as in \eqref{def.F}.  As in the proof of the theorem,
$F'(z)=z^2 \, K''(z)<0$ on $[\theta^\star,1)$ so that
$M^*=\max_{z\in[0,1]}F(z)$ is finite and
\begin{equation}
  \sum_{i=\ell}^{n-1}(-1)^i \, F \Bigl(\frac{x_i}x \Bigr)
  \le F \Bigl( \frac{x_\ell}x \Bigr)
  \le M^* \,.
\end{equation}
Therefore,
\begin{align}
  \omega_n''(x)
  \le - \int_0^{x_\ell/x}
      K''(\theta) \, \theta^2 \, \d \theta + M^* 
  \le \int_0^{\theta^\star} \lvert K''(\theta) \rvert \,
      \theta^2 \, \d\theta + M^*
  < \infty \,.
\end{align}
Hence, \eqref{e.main-ineq} and \eqref{e.main-ineq2} continue to hold
and the remainder of the proof proceeds as before.
\end{remark}

\begin{corollary}
Suppose that $K$ satisfies conditions \ref{i.k1}--\ref{i.k3} stated
in the introduction.  Then there exists $x^*<\infty$ so that the
maximal interval of existence of a precipitation ring pattern in the
sense of \eqref{e.abstract-prec-eqn} is $[0,x^*]$.
\end{corollary}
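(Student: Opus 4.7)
The plan is to proceed by a simple dichotomy based on Definition~\ref{d.degenerate}: any attempted extension of the ring pattern is either degenerate or non-degenerate, and I will show that both cases produce a finite termination point $x^*$. First, if the solution is degenerate, then by the very wording of Definition~\ref{d.degenerate} there is a finite zero $x_i$ beyond which equation \eqref{e.abstract-prec-eqn} cannot be applied on any interval $[x_i, x_i + \eps)$, and we set $x^* = x_i$ directly, finishing this case with no further work.

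Second, suppose the pattern is non-degenerate. I plan to verify that conditions \ref{i.k1}--\ref{i.k3} furnish the hypotheses of both Lemma~\ref{l.infrings} and Theorem~\ref{th.shrink}, apply them in sequence, and read off the finite accumulation point as $x^*$. For Lemma~\ref{l.infrings}, the continuity of $K$ on $[0,1]$, the condition $K(0)=K'(0)=0$ (which yields $K(\theta) = o(\theta)$ at the origin), and the fact that $K(\theta)\sim k\sqrt{1-\theta}>0$ near $\theta=1$ together supply everything required, giving an infinite sequence of precipitation rings. For Theorem~\ref{th.shrink}, the unimodality is explicit in \ref{i.k3}; the $C^2$ regularity in the interior of $[0,1)$ gives the absolute continuity of $K'$ on each compact subinterval; and the asymptotic $K(\theta)\sim k\,(1-\theta)^{1/2}$ identifies $\sigma=1/2$.

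The one non-automatic check is that $\sigma=1/2$ lies in the admissible range $(0,\log_2 3 - 1)$. I will verify this by noting that the inequality $\log_2 3 - 1 > 1/2$ is equivalent to $3 > 2^{3/2} = 2\sqrt 2 \approx 2.828$, which is clear. Thus Theorem~\ref{th.shrink} applies, the zeros $\{x_i\}$ accumulate at some finite point $x^*$, and the pattern cannot be extended past $x^*$ since beyond $x^*$ the sign-change structure underlying \eqref{e.abstract-prec-eqn} is no longer well defined.

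I do not anticipate a significant obstacle here, since the content of the corollary is precisely the combination of the dichotomy in Definition~\ref{d.degenerate} with the two preceding results. The only mildly substantive step is the numerical verification that the exponent $\sigma=1/2$ built into hypothesis \ref{i.k2} is compatible with the range allowed in Theorem~\ref{th.shrink}; everything else is a matter of matching hypotheses.
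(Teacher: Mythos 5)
Your proposal is correct and follows essentially the same route as the paper: split on the degenerate/non-degenerate dichotomy of Definition~\ref{d.degenerate}, handle the degenerate case by definition, and in the non-degenerate case chain Lemma~\ref{l.infrings} with Theorem~\ref{th.shrink} after matching hypotheses (your explicit check that $\sigma=1/2<\log_2 3-1$, i.e.\ $3>2\sqrt2$, is a worthwhile detail the paper leaves implicit). No substantive differences.
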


\begin{proof}
When the solution is degenerate, then such $x^*$ exists by definition.
Otherwise, property \ref{i.k1} and the positivity of the kernel on
$(0,1)$ imply that Lemma~\ref{l.infrings} is applicable, i.e., there
exist an infinite number of precipitation rings.  Then, due to
properties \ref{i.k2} and \ref{i.k3}, Theorem~\ref{th.shrink} applies
and asserts the existence of a finite accumulation point $x^*$ of the
ring pattern.
\end{proof}

\section{Existence of degenerate solutions}
\label{s.degenerate}

In this section, we show that degenerate solutions exist.  These are
solutions to \eqref{omega.explicit.0} which cannot be continued past a
finite number of zeros.  While we cannot settle this question for the
concrete kernel introduced in Section~\ref{s.simplified}, we construct
a kernel $K$ such that the solution cannot be continued in the sense
of \eqref{omega.explicit.0} past $x_2$, the end point of the first
precipitation gap.

\begin{theorem}
\label{t.degenerate}
There exist a non-negative kernel $\K \in C([0,1])$ and a constant
$\Gamma$ such that the solution of the integral equation
\eqref{omega.explicit.0} is degenerate.  Moreover, $\K$ is
differentiable at $\theta=0$ and satisfies conditions \ref{i.k1} and
\ref{i.k2}.
\end{theorem}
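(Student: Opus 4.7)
My plan is to characterize degeneracy at $x_2$ through pointwise conditions on the derivatives of $\omega$, translate these into algebraic conditions on $K$, and then realize those conditions inside a parametric family of admissible kernels.

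\textbf{Local characterization of degeneracy.} Assume $\omega$ has zeros $0 < x_1 < x_2$ with $\omega > 0$ on $[0, x_1)$ and $\omega < 0$ on $(x_1, x_2)$. For $x$ slightly past $x_2$ there are two formally admissible continuations: the \emph{gap branch} $\omega_{\mathrm{gap}}(x) = \Gamma - A(x)$ with $A(x) = x^2 \int_0^{x_1/x} K(\theta)\,\d\theta$, and the \emph{ring branch} $\omega_{\mathrm{ring}}(x) = \omega_{\mathrm{gap}}(x) - B(x)$ with $B(x) = x^2 \int_{x_2/x}^1 K(\theta)\,\d\theta$. Since $K(\theta) \sim k\sqrt{1-\theta}$ as $\theta \to 1$, a direct substitution gives $B(x_2 + h) = c\, h^{3/2} + O(h^{5/2})$ with some $c > 0$. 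Taylor expanding at $x_2$, I would verify that both branches become simultaneously inconsistent with their required signs precisely when
\begin{equation*}
A'(x_2) = A''(x_2) = 0, \qquad A'''(x_2) < 0:
\end{equation*}
the ring branch then behaves like $-c h^{3/2} < 0$ (contradicting the required ring sign), whereas the gap branch behaves like $-A'''(x_2) h^3/6 > 0$ (contradicting the required gap sign). This pins down the algebraic degeneracy scenario in the sense of Definition~\ref{d.degenerate}.

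\textbf{Translation into kernel conditions.} Differentiation of $A$ yields
\begin{align*}
A'(x) & = 2x \textstyle\int_0^{x_1/x}\! K - x_1 K(x_1/x), \\
A''(x) & = 2\textstyle\int_0^{x_1/x}\! K - 2(x_1/x) K(x_1/x) + (x_1/x)^2 K'(x_1/x), \\
A'''(x) & = -(x_1/x)^3\, x^{-1} K''(x_1/x),
\end{align*}
so, combining these with $\Gamma = x_1^2 \int_0^1 K$ (from $\omega(x_1) = 0$) and $\omega(x_2) = 0$, the degeneracy conditions become, with $r = x_1/x_2$ and $I = \int_0^1 K$,
\begin{equation*}
\textstyle\int_0^r K = r^2 I, \qquad K(r) = 2 r I, \qquad K'(r) = 2 I, \qquad K''(r) > 0.
\end{equation*}
After rescaling $K$ so that $I = 1$, this amounts to three algebraic equalities together with one open inequality on the shape of $K$.

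\textbf{Construction.} I would realize these conditions inside the three-parameter family
\begin{equation*}
K_{a,b,c}(\theta) = a\,\theta^2(1-\theta) + b\,\theta^2 \sqrt{1-\theta} + c\,\theta^3 \sqrt{1-\theta},
\end{equation*}
each member of which satisfies \ref{i.k1} and \ref{i.k2} with $k = b + c$ provided $b + c > 0$, and which is non-negative on an open cone of parameters. The three equalities $\int_0^1 K = 1$, $K(r) = 2r$, $K'(r) = 2$ are linear in $(a, b, c)$ for each fixed $r \in (0, 1)$ and determine $(a(r), b(r), c(r))$ by linear algebra. Substituting these into the final equality $\int_0^r K = r^2$ reduces the problem to a single scalar equation $\Phi(r) = 0$ on $(0, 1)$. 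Analyzing the asymptotics of $\Phi$ as $r \to 0^+$ and $r \to 1^-$, I would produce a sign change of $\Phi$ and obtain a root $r^* \in (0, 1)$ from the intermediate value theorem. Finally, I would verify the remaining open conditions $K \geq 0$ on $[0, 1]$ and $K''(r^*) > 0$ for the selected parameters, together with the global sign structure $\omega < 0$ on $(x_1, x_2)$, which follows from $\omega'(x_1^+) = -2 x_1 I < 0$ and the uniqueness of the minimum of $\omega$ on $(x_1, x_2)$ enforced by the parameter choice.

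\textbf{Main obstacle.} The delicate step is closing the intermediate value argument for $\Phi(r) = 0$ while simultaneously enforcing the open positivity and convexity constraints $K \geq 0$ and $K''(r^*) > 0$. Should the three-parameter family prove too rigid to reconcile these requirements, adding an extra term $d\,\theta^4 \sqrt{1-\theta}$ introduces further freedom to steer away from the boundary of the positivity cone while preserving the required endpoint behaviors \ref{i.k1} and \ref{i.k2}.
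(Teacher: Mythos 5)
Your local analysis is correct as far as it goes: the branch dichotomy, the computation $A'''(x) = -(x_1/x)^3x^{-1}K''(x_1/x)$, and the resulting pointwise conditions $\int_0^r K = r^2 I$, $K(r)=2rI$, $K'(r)=2I$, $K''(r)>0$ at $r=x_1/x_2$ are all right, and this is a genuinely different route from the paper. The paper works in the inverse direction: it \emph{prescribes} the degenerate solution first, gluing the template solution on $[0,x_2-\eps]$ to exactly one half of the ring-correction term, $\tfrac12 x^2\int_{(x_2+\eps)/x}^1 \K_\star(\theta)\,\d\theta$, past the breakdown point---so that the gap branch equals $+\tfrac12 B>0$ and the ring branch equals $-\tfrac12 B<0$ by construction---and then reads the kernel off from the solution via the inversion formula \eqref{K.def.2}, finally patching the kernel on $[0,r)$ with splines to restore the normalizations $\int_0^1\K=\int_0^1\K_\star$ and $\Gamma=\int_0^1\G$. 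That inversion buys precisely the two things your direct approach leaves open.

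First, the existence step is not closed. The entire content of the theorem is that a kernel satisfying your three equalities together with the open conditions $K\ge0$, $b+c>0$ and $K''(r^*)>0$ actually exists; you reduce this to a scalar equation $\Phi(r)=0$ but exhibit neither the sign change for the intermediate value theorem nor the verification of the open conditions at the root, and you acknowledge the family may be too rigid. As written this is a reduction, not a construction. (You have also over-constrained yourself: $A'(x_2)=0$ together with $A''(x_2)<0$, i.e.\ $K(r)=2rI$ and $K'(r)<2I$, already makes the gap branch behave like $+\lvert A''(x_2)\rvert h^2/2>0$ while the ring branch is still dominated by $-ch^{3/2}<0$; dropping the tangency condition $A''(x_2)=0$ would trade one equality for an open inequality and give you a whole extra degree of freedom.) Second, the global sign structure is asserted rather than proved: you need $\Gamma-A(x)<0$ on all of $(x_1,x_2)$ so that $x_2$ is genuinely the second zero, and $\omega'(x_1^+)=-2x_1I<0$ only controls a neighborhood of $x_1$. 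The ``uniqueness of the minimum enforced by the parameter choice'' is a global property of $A'$ on $(x_1,x_2)$ that depends on the kernel everywhere on $(r,1)$ and would have to be checked for whatever parameters the root-finding delivers. The paper's construction sidesteps both issues because $\omega_\eps$ coincides with the template solution up to $x_2-\eps$ (so its zero structure is inherited) and is prescribed to be increasing and below $\Gamma$ on $[x_2-\eps,x_2+2\eps]$, with the normalization recovered by an explicit limiting argument in $\eps$ and a one-parameter spline interpolation.
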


\begin{remark}
The proof starts from a kernel template which is then modified on a
subinterval $[0,r)$ to produce a kernel $\K$ with the desired
properties.  When the kernel template is $C^1([0,1))$ and
$C^2((0,1))$, the resulting kernel $\K$ will inherit these properties
except possibly at the gluing point $\theta=r$ where continuity of the
first derivative is not enforced.  This is a matter of convenience,
not of principle: The existence of degenerate solutions does not hinge
on the existence of a jump discontinuity for $\K'$.  Straightforward,
yet technical modifications of the gluing construction employed in the
proof will yield a kernel producing degenerate solutions within the
same class of kernels to which Theorem~\ref{th.shrink}, in the sense
of Remark~\ref{r.shrink-generalization}, applies.
\end{remark}

\begin{remark}
In Section~\ref{s.simplified}, we derived a concrete kernel by
simplifying the HHMO-model.  In that setting, there exists an
integrable function $G$ such that $K(\theta) = \theta^2 \, G(\theta)$
and
\begin{gather}
  \Gamma = \int_0^1 \G(\theta) \, \d\theta \,.
  \label{e.gamma-relation}
\end{gather}
In the proof of the theorem, we preserve this relationship, i.e., the
constant $\Gamma$ here will also satisfy \eqref{e.gamma-relation}.
\end{remark}

\begin{proof}
Take any continuous template kernel $\G_\star \colon [0,1] \to \R_+$
with $\G_\star(\theta)\sim k \, \sqrt{1-\theta}$ as $\theta \to 1$ for
some positive constant $k$.  Set
$\K_\star(\theta)=\theta^2 \, \G_\star(\theta)$ and
\begin{equation}
  \Gamma = \int_0^1 \G_\star(\theta) \, \d\theta \,.
\end{equation}
Then $\K_\star(0)=0$ and
\begin{equation}
  \K^\prime_\star(0)
  = \lim_{\theta\searrow0} \frac{\K_\star(\theta)} \theta
  = \lim_{\theta\searrow0} \theta \, \G_\star(\theta)
  = 0 \,.
\end{equation}
Now consider the solution to \eqref{omega.explicit.0} with template
kernel $\K_*$ in place of $K$.  As in the proof of
Lemma~\ref{l.infrings}, the solution must have at least two zeros
$x_1$ and $x_2$.  Let
\begin{equation}
  \omega_\star(x) =
  \begin{dcases} 
    \Gamma - x^2\int_0^1\K_\star(\theta) \, \d\theta & 
    \text{if $x<x_1$} \,, \\
    \Gamma - x^2 \int_0^{x_1/x} \K_\star(\theta) \, \d\theta & 
    \text{otherwise} \,.
  \end{dcases}
\end{equation}
In the following we assume, for simplicity, that
$\omega_\star^\prime(x_2)>0$.  This is true for generic template
kernels $\G_\star$, thus suffices for the construction.  However, it
is also possible to modify the procedure to come to the same
conclusion then $\omega_\star^\prime(x_2)=0$; for details, see
\cite{Darbenas:2018:PhDThesis}.

Note that $\omega_\star$ is continuously differentiable on $[0,x_2]$,
so $\omega^\prime_\star(x_2-\eps)>0$ for all $\eps>0$ small enough.
For each such small $\eps$, set
\begin{equation}
  \omega_{\eps} (x) =
  \begin{dcases}
    \omega_\star(x) & \text{for } x \in [0,x_2-\eps] \,, \\
    \frac12 \, x^2 
      \int_{\tfrac{x_2+\eps}x}^1 \K_\star(\theta) \, \d\theta
    & \text{for } x \in [x_2+\eps,z_\eps] \,,
  \end{dcases}
  \label{e.omega_epsilon}
\end{equation}
where $z_\eps \in (x_2+\eps,x_2+2\eps]$ is chosen such that
$\omega_\eps(z_\eps) \leq \Gamma/2$.  (This is always possible because
$\omega_\eps(x_2+\eps)=0$ and the second case expression in
\eqref{e.omega_epsilon} is continuous, so we can take $z_\eps$ such
that $\omega_\eps(z_\eps)=\Gamma/2$ if such solution exists on
$(x_2+\eps,x_2+2\eps]$, otherwise we take $z_\eps = x_2+2\eps$.)  We
now fill the gap in the definition of $\omega_{\eps}$ such that
\begin{enumerate}[label={\upshape(\alph*)}]
\item $\omega_\eps \colon [0,x_2 + 2 \eps] \to \R$ is continuously
differentiable,
\item \label{i.omega.ii} $\omega_\eps$ is increasing on
$[x_2-\eps,x_2+2\eps]$,
\item \label{i.omega.iii} $\omega_\eps<\Gamma$ on
$[x_2-\eps,x_2+2\eps]$.
\end{enumerate}
Observe that $\omega_\eps(x_2+\eps)=0$.  Moreover, due to
the positivity of $\K_\star$, $\omega_\eps$ is positive and
strictly increasing on the interval $(x_2+\eps,z_\eps]$, and
\begin{equation}
\label{omega.prime}
 \omega^\prime_\eps(x)
 = x\int_{\tfrac{x_2+\eps}x}^1 \K_\star(\theta) \, \d\theta
   + \frac12 \, (x_2+\eps) \,
     \K_\star \Bigl( \frac{x_2+\eps}x \Bigr) \,.
\end{equation} 
Hence, $\omega_\eps^\prime(x_2+\eps)=0$.

We now define a new kernel $\K_\eps \colon [x_1/(x_2+2\eps),1] \to
\R_+$ via
\begin{align}
  \label{K.def.2}
  \K_\eps (\theta) 
  &=\frac{\omega_\eps^\prime(x_1/\theta)}{x_1}
    - 2 \theta \, \frac{\omega_\eps(x_1/\theta) -
    \Gamma}{x_1^2}
\end{align}
and set $\G_\eps(\theta)=\K_\eps(\theta)/\theta^2$.
Due to
\ref{i.omega.ii} and \ref{i.omega.iii} above, $\K_\eps$ is
positive on its interval of definition.  Moreover, for
$x\in(x_1,x_2+2\eps)$,
\begin{equation}
  \K_\eps\Bigl(\frac{x_1}x\Bigr)
  = \frac{x^2}{x_1} \, \frac\d{\d x}
    \frac{\omega_\eps(x)-\Gamma}{x^2} \,.
  \label{e.K.def.1}
\end{equation}
Thus, for $x \in (x_1,x_2-\eps)$ where $\omega_\eps = \omega_\star$,
\begin{equation}
  \K_\eps\Bigl(\frac{x_1}x\Bigr)
  = \frac{x^2}{x_1} \, \frac\d{\d x}
    \frac{\omega_\star(x)-\Gamma}{x^2}
  = \frac{x^2}{x_1} \, \frac\d{\d x}
    \int_{x_1/x}^1\K_\star(\theta) \, \d\theta
  = \K_\star\Bigl(\frac{x_1}x\Bigr) \,,
  \label{e.K.def.3}
\end{equation}
or, equivalently, 
\begin{equation}
  \K_\eps(\theta)=\K_\star(\theta)\text{ for }\theta \in
  \Bigl[\frac{x_1}{x_2-\eps},1\Bigr] \,.
  \label{e.K.def.4}
\end{equation}

Noting that
\begin{subequations}
\begin{gather}
  \I_{[x_1/(x_2+2\eps),1]}(\theta) \, \K_\eps(\theta)
  \to \I_{[x_1/x_2,1]}(\theta) \, \K_\star(\theta)
\intertext{and}
  \I_{[x_1/(x_2+2\eps),1]}(\theta) \, \G_\eps(\theta)
  \to \I_{[x_1/x_2,1]}(\theta) \, \G_\star(\theta)
\end{gather}
\end{subequations}
pointwise for a.e.\ $\theta$ as $\eps\searrow0$, we find that
\begin{multline}
  \frac{\int_0^1 \K_\star(\theta) \, \d\theta
        - \int^1_{x_1/(x_2+2\eps)} \K_\eps(\theta) \, \d\theta}%
       {\int_0^1 \G_\star(\theta) \, \d\theta
        - \int^1_{x_1/(x_2+2\eps)} \G_\eps(\theta) \, \d\theta}
  - \biggl( \frac{x_1}{x_2+2\eps} \biggr)^2
  \to \\
  \frac{\int_0^1 \K_\star(\theta) \, \d\theta
        - \int^1_{x_1/x_2} \K_\star(\theta) \, \d\theta}%
       {\int_0^1 \G_\star(\theta) \, \d\theta
        - \int^1_{x_1/x_2} \G_\star(\theta) \, \d\theta}
  - \frac{x_1^2}{x_2^2}
  = \frac{\int_0^{x_1/x_2} \K_\star(\theta) \, \d\theta}%
         {\int_0^{x_1/x_2} \G_\star(\theta) \, \d\theta}
  - \frac{x_1^2}{x_2^2} \,.
  \label{e.intfrac}
\end{multline}
Recall that $0\le\K_\star(\theta)=\G_\star(\theta) \, \theta^2$, so
that $\K_\star(\theta)\le\G_\star(\theta) \, x_1^2/x_2^2$ on
$[0, x_1/x_2]$.  This implies that the right hand side of
\eqref{e.intfrac} is negative so that there exists $\eps>0$ such
that
\begin{equation}
  \frac{\int_0^1 \K_\star(\theta) \, \d\theta
        - \int^1_{x_1/(x_2+2\eps)}
          \K_{\eps}(\theta) \, \d\theta}%
       {\int_0^1 \G_\star(\theta) \, \d\theta
        - \int^1_{x_1/(x_2+2\eps)}
          \G_{\eps}(\theta) \, \d\theta}
  < \biggl( \frac{x_1}{x_2+2\eps} \biggr)^2 \,.
\end{equation}
In all of the following, we fix $\eps>0$ such that this inequality
holds true, abbreviate $r={x_1}/(x_2+2\eps)$, and set
$\G(\theta) = \G_{\eps}(\theta)$ and $\K(\theta) = \K_{\eps}(\theta)$
for $\theta \in [r,1]$.  We still need to define $\G$ and $\K$ on the
interval $[0,r)$, which is done as follows.

Since $\int_0^r r^{-n} \, \theta^n \, \d \theta\to0$ as $n\to\infty$,
we can choose $n$ such that
\begin{equation}
  r_\star^2
  \equiv
  \frac{\int_0^1 \K_\star(\theta) \, \d\theta
        - \int^1_r \K_{\eps}(\theta) \, \d\theta
        - \G_{\eps}(r) \int_0^r r^{-n} \, \theta^{n+2} \, \d\theta}%
       {\int_0^1 \G_\star(\theta) \, \d\theta
        - \int^1_r \G_{\eps}(\theta) \, \d\theta
        - \G_{\eps}(r) \int_0^r r^{-n} \, \theta^n \, \d \theta}
  < r^2 \,.
  \label{e.rstar2}
\end{equation}
Define $b_1,b_2 \in C^3([0,r],\R_+)$ as the spline functions
\begin{subequations}
  \label{B.spline}
\begin{gather}
  b_1(\theta)
    = \I_{[0,r_\star/2]}(\theta) \,
      \theta^4 \, (r_\star/2-\theta)^4 \,, \\
  b_2(\theta)
    = \I_{[r_\star/2,r_\star]}(\theta) \,
      (r_\star-\theta)^4 \, (\theta-r_\star/2)^4 \,.
\end{gather}
\end{subequations}
By the integral mean value theorem,
\begin{gather}
  \frac{\int_0^r \theta^2 \, b_1(\theta) \, \d \theta}%
       {\int_0^r b_1(\theta) \, \d \theta}
  = \frac{\int_0^{r_\star} \theta^2 \, b_1(\theta) \, \d \theta}%
         {\int_0^{r_\star} b_1(\theta) \, \d \theta}
  < r_\star^2
  < \frac{\int^r_{r_\star} \theta^2 \, b_2(\theta) \, \d \theta}%
         {\int^r_{r_\star} b_2(\theta) \, \d \theta}
  = \frac{\int_0^r \theta^2 \, b_2(\theta) \, \d \theta}%
         {\int_0^r b_2(\theta) \, \d \theta} \,.
\end{gather}
Now define $B_1,B_2 \colon [0,1]\to\R_+$ as
\begin{gather}
  B_1(\lambda)
  = \lambda \int_0^r \theta^2 \, b_1(\theta) \, \d \theta
    + (1-\lambda) \int_0^r \theta^2 \, b_2(\theta) \, \d \theta \,, \\
  B_2(\lambda)
  = \lambda \int_0^r b_1(\theta) \, \d \theta
    + (1-\lambda) \int_0^r b_2(\theta) \, \d \theta \,.
\end{gather}
Clearly, $\frac{B_1(0)}{B_2(0)}>r_\star^2$ and
$\frac{B_1(1)}{B_2(1)}<r_\star^2$. Hence, due to continuity with
respect to $\lambda$, we can find $\lambda_\star \in (0,1)$ such that
\begin{equation}
  \label{lambda.star}
  \frac{B_1(\lambda_\star)}{B_2(\lambda_\star)} = r_\star^2 \,.
\end{equation}
Finally, on the interval $[0,r)$, we define
\begin{gather}
  \K (\theta)
  = \frac{k_\star}{B_1(\lambda_\star)} \,
    \bigl(
      \lambda_\star \, \theta^2 \, b_1(\theta)
      + (1-\lambda_\star) \, \theta^2 \, b_2(\theta)
    \bigr)
    + \G_{\eps}(r) \, \frac{\theta^{n+2}}{r^n} \,,
\end{gather}
where $k_\star$ denotes the numerator of the fraction defining $r_*^2$
in \eqref{e.rstar2}.  Further, we set
$\G(\theta) = \K(\theta)/\theta^2$.  Then $\K$ and $\G$ are continuous
on $[0,1]$, strictly positive on $(0,1)$, and, by direct computation,
satisfy
\begin{subequations}
\begin{gather}
  \int_0^1\K(\theta) \, \d\theta 
  = \int_0^1\K_\star(\theta) \, \d\theta \label{e.k-star-01} \\
\intertext{and}
  \int_0^1\G(\theta) \, \d\theta 
  = \int_0^1\G_\star(\theta) \, \d\theta = \Gamma \,.
\end{gather}
\end{subequations}
Further, using \eqref{e.k-star-01}, \eqref{e.K.def.1},
and the fact that $\omega_\star(x_1)=0$, we verify that 
\begin{equation}
  \omega_{\eps} (x) = 
  \begin{dcases}
    \Gamma - x^2\int_0^1 \K (\theta) \, \d\theta 
    & \text{for } x \in [0,x_1) \,, \\
    \Gamma -x^2 \int_0^{x_1/x} \K(\theta) \, \d\theta 
    & \text{for } x \in [x_1, x_2+2\eps] \,.
  \end{dcases}
  \label{e.omega-newdef}
\end{equation}
on the entire interval $[0, x_2+2\eps]$.  Moreover, comparing
\eqref{e.omega-newdef} with \eqref{omega.explicit.0} and noting that
$x_1$ is the first and $x_2+\eps$ the second zero of $\omega_\eps$ by
construction, we see that $\omega_{\eps}$ satisfies
\eqref{omega.explicit.0} at least on the interval $[0,x_2+\eps]$.

To complete the proof, we show that it is not possible to find a
solution $\omega$ to \eqref{omega.explicit.0} that extends to a
non-degenerate solution past the interval $[0,x_2+\eps]$ on which
$\omega = \omega_{\eps}$.  Assume that, on the contrary, such an
extension exists.  Then there exists a small interval
$I=(x_2+\eps,x_2+\eps+\eps_\star)$ on which $\omega$ is either
positive or negative.  (Note that we must require that
$\eps_\star<z_{\eps}$, cf.\ \eqref{e.omega_epsilon} where $z_\eps$ is
first introduced.)  Suppose first that $\omega<0$ on $I$.  Then
\eqref{e.omega-newdef} continues to provide a solution for
\eqref{omega.explicit.0} on $I$, but $\omega_{\eps}$ is positive
there, a contradiction.  Suppose then that $\omega>0$ on $I$.  Then,
using \eqref{e.abstract-prec-eqn} and \eqref{e.omega_epsilon}, we
express $\omega$ on the interval $I$ as
\begin{align}
  \omega(x)
  & = \Gamma - x^2 \int_0^{\tfrac{x_1}x} \K(\theta) \, \d\theta
      - x^2\int_{\tfrac{x_2+\eps}x}^1 \K (\theta) \, \d\theta
      \notag\\
  & = \omega_{\eps}(x)
      - x^2 \int_{\tfrac{x_2+\eps}x}^1 \K (\theta) \, \d\theta
      \notag\\
  & = - \frac12 \, x^2 \int_{\tfrac{x_2+\eps}x}^1
      \K_\star(\theta) \, \d\theta < 0 \,.
 \label{e.extension-positive}
\end{align}
The last equality is due to \eqref{e.omega_epsilon} and
\eqref{e.K.def.4} which is applicable for sufficiently small
$\eps_\star$.  Again, this contradicts the assumed sign of $\omega$.
Thus, we conclude that $\omega$ cannot be extended via formula
\eqref{e.abstract-prec-eqn} onto any right neighborhood of
$x=x_2+\eps$.
\end{proof}

\section{Extended solutions for the simplified HHMO-model}
\label{exist.simplified}

So far, we have seen that precipitation band patterns as a sequence of
intervals in which $\omega(x)>0$, i.e.\ the reactant concentration
exceeds the super-saturation threshold, must break down at a finite
location $x^*$ which is either an accumulation point given by
Theorem~\ref{th.shrink}, or until a point at which the solution
degenerates after a finite number of precipitation bands as in
Theorem~\ref{t.degenerate}.  In this section, we consider a more
general notion of solution which is motivated by the construction of
weak solutions to the full HHMO-model in
\cite{HilhorstHM:2009:MathematicalSO}.

\begin{definition}
\label{d.extended}
A pair $(\omega, \rho)$ is an \emph{extended solution} of the
simplified HHMO-model if $\omega \in C([0,\infty))$,
\begin{equation}
\label{HHMO.extended}
  \omega(x)
  = \Gamma-x^2\int_0^1K(\theta) \, \rho(x\theta) \, \d\theta \,,
\end{equation}
and $\rho$ is a measurable function on $[0,\infty)$ taking values from
the Heaviside graph, i.e.,
\begin{equation}
\label{Heaviside}
  \rho(y)\in H(\omega(y))
  = \begin{cases}
      0 & \text{if }\omega(y)<0 \,, \\
      [0,1] & \text{if }\omega(y)=0 \,, \\
      1 & \text{otherwise} \,.
    \end{cases}
\end{equation}
\end{definition}

\begin{theorem}
If $K \in C([0,1])$, then an extended solution to
\eqref{HHMO.extended} exists.
\end{theorem}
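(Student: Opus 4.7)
The plan is to regularize the Heaviside, invoke Schauder's fixed-point theorem on each bounded interval, and pass to the limit in a uniform $\times$ weak-$\star$ topology. For $\eps>0$, let $H_\eps \colon \R \to [0,1]$ be continuous with $H_\eps(s)=0$ for $s\le 0$ and $H_\eps(s)=1$ for $s\ge\eps$, interpolating linearly in between. Fix $L>0$ and consider the operator $T_\eps\colon C([0,L])\to C([0,L])$ given by $T_\eps\omega(x)=\Gamma-x^2\int_0^1 K(\theta) \, H_\eps(\omega(x\theta)) \, \d\theta$. The key preliminary observation is that for any measurable $\rho\colon[0,L]\to[0,1]$ the change of variables $y=x\theta$ rewrites the variable part as $\Psi[\rho](x)=\Gamma-x\int_0^x K(y/x) \, \rho(y) \, \d y$; a direct splitting of the difference $\Psi[\rho](x_2)-\Psi[\rho](x_1)$ using only the uniform continuity of $K$ on $[0,1]$ yields a modulus of continuity for $\Psi[\rho]$ that depends on $K$ and $L$ but not on $\rho$. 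Hence the image of $\{\rho\colon 0\le\rho\le 1\}$ under $\Psi$ is precompact in $C([0,L])$; its closed convex hull $\Sigma$ is a compact convex set that $T_\eps$ maps continuously into itself (continuity via dominated convergence, with uniform convergence recovered from equicontinuity inside $\Sigma$). Schauder's theorem supplies a fixed point $\omega_\eps\in\Sigma$.

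Setting $\rho_\eps=H_\eps\circ\omega_\eps$, the family $\{\omega_\eps\}$ lies in the compact set $\Sigma$, and $\{\rho_\eps\}$ is bounded in $L^\infty([0,L])$; so there is a sequence $\eps_n\downarrow 0$ along which $\omega_{\eps_n}\to\omega$ uniformly on $[0,L]$ and $\rho_{\eps_n}\to\rho$ in the weak-$\star$ topology of $L^\infty([0,L])$. For each fixed $x\in[0,L]$, the function $y\mapsto K(y/x) \, \I_{[0,x]}(y)$ lies in $L^1$, so the weak-$\star$ convergence lets us pass to the limit in $\omega_{\eps_n}(x)=\Gamma-x\int_0^x K(y/x) \, \rho_{\eps_n}(y) \, \d y$ and recover the required identity \eqref{HHMO.extended}.

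The delicate step is verifying $\rho(y)\in H(\omega(y))$ for almost every $y$. Since $\omega$ is continuous, $\{\omega>0\}$ is open and $\sigma$-compact. On any compact $S\subset\{\omega>0\}$ one has $\inf_S \omega >0$, so uniform convergence forces $\omega_{\eps_n}>\eps_n$ on $S$ for all large $n$, whence $\rho_{\eps_n}\equiv 1$ on $S$; the weak-$\star$ limit then pins down $\rho=1$ almost everywhere on $S$, and therefore on all of $\{\omega>0\}$. The symmetric argument gives $\rho=0$ almost everywhere on $\{\omega<0\}$. The inequalities $0\le\rho\le 1$ persist under weak-$\star$ limits, so on $\{\omega=0\}$ the inclusion $\rho\in[0,1]$ is automatic; together these three facts are exactly $\rho\in H(\omega)$ almost everywhere.

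Finally, to extend to $[0,\infty)$, we repeat the above construction on each $[0,N]$, $N\in\N$, and extract nested subsequences by a diagonal argument to obtain a single pair $(\omega,\rho)$ on the whole half-line. The main technical obstacle throughout is the equicontinuity of $\Psi[\rho]$ near $x=0$, where the rescaling degenerates; this is resolved by the elementary bound $|\Psi[\rho](x)-\Gamma|\le x^2\,\|K\|_{L^1}$, which supplies a modulus of continuity at the origin that is independent of $\rho$ and glues cleanly to the interior estimate.
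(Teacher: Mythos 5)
Your proposal is correct and follows the same overall architecture as the paper's proof: mollify the Heaviside function, solve the regularized equation, extract a uniformly convergent subsequence of $\omega_\eps$ and a weak-$*$ convergent subsequence of $H_\eps\circ\omega_\eps$, pass to the limit using that $y\mapsto K(y/x)\I_{[0,x]}(y)$ is in $L^1$, and verify the Heaviside inclusion separately on the open sets $\{\omega>0\}$ and $\{\omega<0\}$. The only substantive difference is how you solve the mollified problem: the paper exploits the Lipschitz continuity of $H_\eps$ to run a contraction-mapping argument giving local existence, followed by an open-closed continuation to reach all of $[0,\infty)$, whereas you invoke Schauder's theorem on the compact convex hull of $\Psi[\{\rho : 0\le\rho\le1\}]$. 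Your route needs only continuity of $H_\eps$ and recycles the $\rho$-independent equicontinuity estimate (including the quadratic bound near $x=0$) that is needed for the compactness step anyway, at the price of giving up uniqueness of $\omega_\eps$, which is irrelevant for the existence statement; the paper's route gets a unique, globally defined $\omega_\eps$ in one stroke but leans on the Lipschitz constant $\lVert H_\eps'\rVert_{L^\infty}$. Both are complete proofs.
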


\begin{proof}
Changing variables, we write \eqref{HHMO.extended} as
\begin{equation}
  \omega(x) = \Gamma
  - x \int_0^x K \Bigl( \frac y x \Bigr) \, \rho(y) \, \d y \,.
\end{equation}
Now consider a family of mollified Heaviside functions
$H_\eps\in C^\infty(\R,[0,1])$ parameterized by $\eps>0$ such that
$H_\eps(z)=1$ for $z\ge\eps$ and $H_\eps(z)=0$ for $z\le-\eps$.  We
claim that, for fixed $\eps>0$, the corresponding mollified equation
\begin{equation}
  \label{omega.epsilon}
  \omega_\eps(x)
  = \Gamma - x \int_0^x
    K \Bigl( \frac y x \Bigr) \, H_\eps(\omega_\eps(y)) \, \d y
\end{equation}
has a solution $\omega_\eps \in C([0,\infty))$.  Indeed, suppose that
$\omega_\eps$ is already defined on some interval $[0,a]$, where $a$
may be zero.  We seek $\omega_\eps \in C([a,a+\delta])$ which
continuously extends $\omega_\eps$ past $x=a$ as a fixed point of a
map $T$ from $C([a,a+\delta])$ endowed with the supremum norm into
itself, defined by
\begin{equation}
  T[\phi] (x)
  = \Gamma-x\int_0^a K\left(\frac{y}x \right) \,
      H_\eps(\omega_\eps(y)) \, \d y
    - x\int_a^x K\left(\frac{y}x \right) \,
      H_\eps(\phi(y)) \, \d y \,.
\end{equation}
Since
\begin{align}
  \bigl| T[\phi](x) - T[\psi](x) \bigr|
  & = \biggl|
        x \int_a^x K \Bigl( \frac y x \Bigr) \,
        \bigl( H_\eps(\psi(y)) - H_\eps(\phi(y)) \bigr) \, \d y
      \biggr|
      \notag \\
  & \leq x \int_a^x \Bigl| K \Bigl( \frac y x \Bigr) \Bigr| \,
      \norm{}{H_\eps(\phi) - H_\eps(\psi)}{L^\infty} \, \d y
      \notag \\
  & \leq x \, (x-a) \, \norm{}{K}{L^\infty} \,
      \norm{}{H_\eps^\prime}{L^\infty} \,
      \norm{}{\phi - \psi}{L^\infty} \,,
\end{align}
$T$ is a strict contraction for $\delta>0$ small enough, hence has a
unique fixed point.  In addition, the maximal interval of existence of
$\omega_\eps$ is closed, as the right hand side of
\eqref{omega.epsilon} is continuous, and open at the same time due to
the preceding argument.  Thus, a solution
$\omega_\eps \in C([0,\infty))$ exists (and is unique).

By direct inspection, for every fixed $b>0$, the families
$\{\omega_\eps\}$ and $\{H_\eps\circ\omega_\eps\}$ are uniformly
bounded in $C([0,b])$ endowed with the supremum norm.  Moreover,
$\{\omega_\eps\}$ is equicontinuous.  Indeed, for $y,z \in (0,b]$,
\begin{equation}
  \biggl|
    \frac{\omega_{\eps_i}(z)-\Gamma}{z}
    - \frac{\omega_{\eps_i}(y)-\Gamma}y
  \biggr|
  \leq \int_0^{\max \{y,z\}}
    \biggl|
      K \Bigl(\frac \theta{z} \Bigr) - K \Bigl(\frac\theta{y} \Bigr)
    \biggr| \, \d \theta \,,
\end{equation}
where, by the dominated convergence theorem, the right hand side
converges to zero as $z \to y$.  Equicontinuity at $y=0$ is obvious.
Thus, by the Arzel\'a--Ascoli theorem, there exist a decreasing
sequence $\eps_i \to 0$ and a function $\omega \in C([0,b])$ such that
$\omega_{\eps_i} \to \omega$ in $C([0,b])$.  Further, by the
Banach--Alaoglu theorem, there exists $\rho \in L^\infty([0,b])$ such
that, possibly passing to a subsequence,
$H_{\eps_i}\circ\omega_{\eps_i} \rightharpoonup \rho$ weakly-$*$ in
$L^\infty$.  This implies that $\rho$ takes values a.e.\ from the
interval $[0,1]$ which contains the convex hull of the sequence.
Passing to the limit in \eqref{omega.epsilon}, we conclude that
\begin{equation}
  \omega(x) = \Gamma
  - x \int_0^x K \Bigl( \frac y x \Bigr) \, \rho(y) \, \d y  \,.
\end{equation}
Finally, we claim that $\rho(y)=1$ whenever $\omega(y)>0$.  Indeed,
fixing $y$ such that $\omega(y)>0$, equicontinuity of $\omega_\eps$
implies that there exists a neighborhood of $y$ on which
$\omega_{\eps_i}$ is eventually strictly positive.  On this
neighborhood, $H_{\eps_i}\circ\omega_{\eps_i}$ converges strongly to
$1$.  A similar argument proves that $\rho(y)=0$ whenever
$\omega(y)<0$.

So far, we have shown that $(\omega,\rho)$ satisfy
\eqref{HHMO.extended} and \eqref{Heaviside} on $[0,b]$.  To extend the
interval of existence, we can iteratively restart the compactness
argument on intervals $[0,nb]$ for $n\in \N$, passing to a subsequence
each time.  The proves existence of an extended solution on
$[0,\infty)$.
\end{proof}

Uniqueness of extended solutions is a much more delicate issue.  In
the following particular case, we can give a positive answer to the
question of uniqueness.

\begin{definition}
\label{reg.extend}
An extended solution $(\omega,\rho)$ to the simplified HHMO-model is
\emph{regularly extended} to an interval $[x^\star,x^\star+\eps]$,
where $x^*$ is the point of breakdown in the sense of
Theorem~\ref{th.shrink} or Theorem~\ref{t.degenerate} and $\eps>0$, if
$\omega \equiv 0$ on this interval.
\end{definition}

If $(\omega_1, \rho_1)$ and $(\omega_2,\rho_2)$ are pairs of regularly
extended solutions to \eqref{HHMO.extended}, then $\omega_1$ and
$\omega_2$ coincide on $[0,x^*]$ by construction and on
$[x^\star,x^\star+\eps]$ by definition.  Moreover,
$\Delta \rho=\rho_1-\rho_2 = 0$ on $[0,x^\star]$.  Thus, the question
of uniqueness reduces to a statement on the non-existence of
non-trivial solutions to the linear homogeneous integral equation
\begin{equation}
  \label{WDCVIE}
  \int_0^1K(\theta) \, \Delta \rho(x\theta) \, \d \theta = 0 
\end{equation}
for $x\in[0,x^\star+\eps]$.  Due to properties \ref{i.k1}--\ref{i.k3}
of the kernel, the problem falls into the general class of weakly
degenerate cordial Volterra integral equations.  In
\cite{DarbenasO:2019:UniquenessSW}, we answer this question in the
affirmative.  

While we believe that extended solutions are generically regularly
extended, we cannot exclude the possibility that extended solutions
develop a precipitation band pattern that accumulates at $x^*$ from
above.  Thus, the general question of unique extendability remains
open.

We conjecture that the question of uniqueness of extended solutions
might be addressed by replacing Definition~\ref{d.extended} by a
formulation in terms of a mixed linear complementarity problem.  To be
concrete, write $\omega = \omega_+ - \omega_-$, where $\omega_+$ is
the positive part and $\omega_-$ the negative part of $\omega$.
Further, set $\sigma = 1-\rho$ and define the vector functions
\begin{equation}
  V =
  \begin{pmatrix}
    \sigma \\ \rho
  \end{pmatrix}
  \qquad \text{and} \qquad
  W =
  \begin{pmatrix}
    \omega_+ \\ \omega_-
  \end{pmatrix} \,.
\end{equation}
Then we can formulate the extended solution as follows.  Find $V \geq
0$, $W \geq 0$ such that
\begin{subequations}
\begin{equation}
  \mathcal L V + \mathcal M W + B = 0
\end{equation}
subject to 
\begin{equation}
  \langle V, W \rangle = 0 \,,
  \label{e.complementarity-condition}
\end{equation}
where $\mathcal L$ and $\mathcal M$ are linear operators defined by
\begin{gather}
  \mathcal L V =
  \begin{pmatrix}
    \displaystyle x^2 \int_0^1 K(\theta) \, \rho(x\theta) \, \d \theta \\
    \rho + \sigma
  \end{pmatrix} \,, \\
  \mathcal M W = 
  \begin{pmatrix}
    \omega_+ - \omega_- \\
    0
  \end{pmatrix} \,,
\end{gather}
and
\begin{gather}
  B =
  \begin{pmatrix}
    \Gamma \\ -1
  \end{pmatrix} \,.
\end{gather}
\end{subequations}
The angle brackets in \eqref{e.complementarity-condition} denote the
canonical inner product for vectors of $L^2((0,a))$-functions,
\begin{equation}
  \langle V, W \rangle
  = \int_0^a \omega_+ (x) \, \sigma(x) \, \d x
    + \int_0^a \omega_- (x) \, \rho(x) \, \d x \,.
\end{equation}
This formulation is known as a mixed linear complementarity problem.
To make progress here, it is necessary to adapt Lions--Stampaccia
theory \cite{LionsS:1967:VariationalI} to the case of mixed
complementarity problems; see, e.g.,
\cite{CryerD:1980:EquivalenceLC,ZengAY:2009:EquivalenceML}, for
different reformulations in a Sobolev space setting.

\section{Discussion}
\label{s.discussion}

In this paper, we have identified a mechanism which leads to very
rapid, i.e., finite-time equilibrization of a dynamical system with
memory that is ``damped'' via relay hysteresis.  Past a certain point,
a solution can only be continued in a generalized sense by
``completing the relay''.  We can assert existence and conditional
uniqueness for the generalized solution, but full well-posedness
remains open.  Possible approaches are a reformulation of the concept
of generalized solution in terms of a mixed linear complementarity
problem as outlined above, or possibly a fixed point formulation using
fractional integral operators.  We believe that the integral equation
\eqref{omega.explicit.0} is a useful test bed for studying such
approaches, with the hope to eventually transfer results to more
general reaction-diffusion equations with relay hysteresis.

The detailed observed behavior is very much tied to property
\ref{i.k1}, the square-root degeneracy of the kernel $K$ near
$\theta=1$.  From the perspective of solving integral equations, this
behavior is too degenerate for classical contraction mapping arguments
as used by Volterra to apply, but it is sufficiently non-degenerate
that strong results can still be proved, see the discussion in
\cite{DarbenasO:2019:UniquenessSW}.  This degeneracy is associated
with the scaling behavior of the heat kernel, so even when an exact
reduction from a PDE to an integral equation, as is possible for the
simplified HHMO-model, is not available, the associated phenomenology
is expected to survive.

Let us finally remark on the connection of our models to the
real-world Liesegang precipitation phenomenon.  Our detailed results
on the breakdown of patterns for the simplified model imply that we
cannot expect the Keller--Rubinow family of models to provide a good
literal description of Liesegang rings.  However, the fact that,
independent of the details of simplified vs.\ full dynamics, the
models converge rapidly toward a steady-state which \emph{only exists
as a generalized solution}, we believe that it might be possible to
interpret fractional values of the precipitation function as a
\emph{precipitation density}.  In this view, the model would provide a
coarse-grained description of the phenomenon in the sense that the
precise information about the location of the rings is lost, but the
local average fraction of space covered by precipitation rings can
still be asserted.  If this suggested interpretation is valid, the
explicit asymptotic profile detailed in Section~\ref{s.simplified}
will provide a direct relationship between the parameters of the
system and the \emph{macroscopic} properties of the Liesegang pattern.

This point of view is different from that taken by Duley \emph{et
al.}\ \cite{DuleyFM:2019:RegularizationOS}.  As they numerically
encountered a qualitatively similar breakdown of solutions in the full
Keller--Rubinow model (the model without the fast reaction limit
taken), they chose to modify the dynamics of the model by introducing
a delay variable to smooth out the onset of precipitation.  Their
approach aims at a more physical description at the \emph{microscopic}
level, i.e., toward a model that can correctly represent the width and
location of each individual Liesegang ring.

\appendix
\section{Properties of the simplified HHMO-kernel}
\label{kernel.prop}

In the following, we prove a collection of results on the asymptotic
behavior of the kernel $G$ as given in \eqref{e.G} which imply
properties \ref{i.k1} and \ref{i.k2}.  As a corollary, we obtain that
$G$ is integrable, i.e., that the integrals in \eqref{omega.explicit}
and \eqref{e.Gamma} are finite.  We have verified property \ref{i.k3}
numerically, which is easily done to machine accuracy.  A proof seems
feasible, but would be rather involved and tedious, and does not offer
further insight into the problem.

We begin by observing that $G$ is clearly continuous on $(-1,0)$ and
$(0,1)$.  Thus, we focus on the local asymptotics of $G$ near
$\theta=1$, $\theta=-1$ and $\theta=0$.

\begin{lemma} \label{l.g1}
$\displaystyle G(\theta) \sim \sqrt{\frac{2}{\pi}} \,
\frac{u^*}{\alpha} \, \sqrt{1-\theta}$ as $\theta\to1$.
\end{lemma}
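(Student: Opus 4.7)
The plan is to reduce the integral defining $G$ to an explicit form whose leading behavior as $\theta \to 1$ can be read off directly. First, I would desingularize the integrand at the lower endpoint $\zeta = \alpha\theta$ via the substitution $s = \sqrt{\zeta^2 - \alpha^2 \theta^2}$, which eliminates the square-root singularity and yields
\begin{equation*}
  G(\theta) = \frac{\alpha}{\sqrt \pi}
    \int_0^{\alpha\sqrt{1-\theta^2}}
      \exp\Bigl(-\frac{\zeta^2 \, \alpha^2 \, (1-\theta)^2}{4 s^2}\Bigr) \,
      \frac{\Phi(\zeta)}{\zeta^3} \, \d s \,,
\end{equation*}
where now $\zeta = \sqrt{\alpha^2 \theta^2 + s^2}$. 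Then, with an eye toward the natural scale as $\theta \to 1$, rescale via $s = \alpha(1-\theta) u$ to obtain
\begin{equation*}
  G(\theta) = \frac{\alpha^2 (1-\theta)}{\sqrt \pi}
    \int_0^{U(\theta)}
      \exp\Bigl(-\frac{\zeta^2}{4 u^2}\Bigr) \,
      \frac{\Phi(\zeta)}{\zeta^3} \, \d u \,,
\end{equation*}
with $U(\theta) = \sqrt{(1+\theta)/(1-\theta)}$ and $\zeta = \alpha \sqrt{\theta^2 + (1-\theta)^2 u^2}$. The key observation is that on the entire interval of integration, $\zeta$ is monotone in $u$ with $\alpha\theta \le \zeta \le \alpha$, so $\zeta \to \alpha$ uniformly in $u$ as $\theta \to 1$.

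Using continuity of $\Phi$ at $\alpha$ with $\Phi(\alpha) = u^*$, the factor $\Phi(\zeta)/\zeta^3$ can then be replaced by $u^*/\alpha^3$ at a uniform cost of $o(1)$. The remaining integral is evaluated via
\begin{equation*}
  \int_0^{U(\theta)} \exp\Bigl(-\frac{\zeta^2}{4 u^2}\Bigr) \, \d u
  = U(\theta) - \int_0^{U(\theta)}
       \Bigl(1 - \exp\Bigl(-\frac{\zeta^2}{4 u^2}\Bigr)\Bigr) \, \d u
  = U(\theta) + O(1) \,,
\end{equation*}
where the bound on the correction integral comes from splitting $[0,U(\theta)]$ into $[0,1]$ (integrand trivially bounded by $1$) and $[1,U(\theta)]$ (using $1 - \e^{-\zeta^2/(4u^2)} \le \alpha^2/(4u^2)$, which integrates against $u^{-2}$). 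Assembling the pieces,
\begin{equation*}
  G(\theta)
  = \frac{u^*}{\sqrt\pi \, \alpha} \, \sqrt{1-\theta^2} \, (1 + o(1))
  \sim \sqrt{\frac{2}{\pi}} \, \frac{u^*}{\alpha} \, \sqrt{1-\theta} \,,
\end{equation*}
since $\sqrt{1-\theta^2} = \sqrt{1+\theta} \, \sqrt{1-\theta} \sim \sqrt 2 \, \sqrt{1-\theta}$.

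The main subtlety is that $U(\theta) \to \infty$ as $\theta \to 1$, so naive dominated convergence on the rescaled integral is not applicable. The proof pivots on the observation that $\e^{-\zeta^2/(4u^2)} \to 1$ as $u \to \infty$, which makes the integral grow linearly in $U(\theta)$ with coefficient exactly $1$, while the deviation is controlled by a uniformly bounded correction. Exploiting this growth, rather than a pointwise limit, is what produces the precise $\sqrt{1-\theta^2}$ prefactor and hence the constant $\sqrt{2/\pi} \, u^*/\alpha$.
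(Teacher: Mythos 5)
Your argument is correct and is essentially the paper's own proof in lightly different clothing: your composite substitution $s=\sqrt{\zeta^2-\alpha^2\theta^2}$, $s=\alpha(1-\theta)u$ agrees (up to the harmless factor $2/(\alpha\theta)$) with the paper's variable $\sigma^2 = 4(\zeta^2-\alpha^2\theta^2)/(\alpha^4\theta^2(1-\theta)^2)$, and both proofs rest on the same two points, namely that $\Phi(\zeta)/\zeta^3 \to u^*/\alpha^3$ uniformly on the shrinking range of $\zeta$ (the paper pulls this factor out by the integral mean value theorem instead of by uniform continuity) and that the rescaled integral is asymptotic to its divergent upper limit $U(\theta)\sim\sqrt{2/(1-\theta)}$. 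If anything, your explicit $U(\theta)+O(1)$ bound via the splitting $[0,1]\cup[1,U(\theta)]$ is a slightly more quantitative justification of the step the paper dispatches with the phrase ``averages converge to asymptotic values''.
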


\begin{proof}
Rearranging expression \eqref{e.G}, applying the integral mean value
theorem, and setting $\sigma^2 = \frac{4(\zeta^2 - \alpha^2 \,
\theta^2)}{\alpha^4 \, \theta^2(1-\theta)^2}$, we obtain
\begin{align}
  G(\theta)
  & = \frac1{\sqrt \pi} \int_{\alpha\lvert\theta\rvert}^\alpha
      \frac{\alpha\zeta}{\sqrt{\zeta^2-\alpha^2 \, \theta^2}} \,
	\exp \biggl(
               - \frac{\alpha^4 \, \theta^2 \, (1-\theta)^2}%
                      {4 \, (\zeta^2 - \alpha^2 \, \theta^2)}
             \biggr)
        \exp \biggl(
               - \frac{\alpha^2 \, (1-\theta)^2}%
                      {4}
             \biggr) \,
        \frac{\Phi (\zeta)}{\zeta^3} \, \d \zeta 
    \notag \\
  & = \frac{\Phi(\zeta_\theta)}{\sqrt \pi \, \zeta_\theta^3} \,
      \exp \biggl(
               - \frac{\alpha^2 \, (1-\theta)^2}{4}
	   \biggr)
      \int_{\alpha\lvert\theta\rvert}^\alpha
        \frac{\alpha\zeta}{\sqrt{\zeta^2-\alpha^2 \, \theta^2}} \,
	\exp \biggl(
               - \frac{\alpha^4 \, \theta^2 \, (1-\theta)^2}%
                      {4 \, (\zeta^2 - \alpha^2 \, \theta^2)}
             \biggr) \, \d \zeta
    \notag \\
  & = \frac{\Phi(\zeta_\theta) \, \alpha^3}%
           {2 \sqrt \pi \, \zeta_\theta^3} \,
      \exp \biggl(
               - \frac{\alpha^2 \, (1-\theta)^2}{4}
	   \biggr) \, \lvert \theta \rvert \, (1-\theta) \, 
      \int_0^{z(\theta)}
	\exp \biggl(
               - \frac1{\sigma^2}
             \biggr) \, \d \sigma 
  \label{e.G.alt}
\end{align}
for some $\zeta_\theta \in [\alpha \lvert\theta\rvert, \alpha]$ and
\begin{equation}
  z(\theta) = \frac2{\alpha \, |\theta|} \, 
              \sqrt{\frac{1+\theta}{1-\theta}} \,.
  \label{e.ztheta}
\end{equation} 
When $\theta \to 1$, we have $z(\theta) \to \infty$.  Then, as averages
converge to asymptotic values,
\begin{equation}
  \label{sim.av}
  \frac1{z(\theta)} \int_0^{z(\theta)}
	\exp \biggl(
               - \frac1%
                      {\sigma^2}
             \biggr) \, \d \sigma \to 1 \,.
\end{equation}
For the prefactor in \eqref{e.G.alt}, we observe that
\begin{equation}
  \lim_{\theta \to 1}
    \frac{\Phi(\zeta_\theta) \, \alpha^3}%
           {2 \sqrt \pi \, \zeta_\theta^3} \,
      \exp \biggl(
               - \frac{\alpha^2 \, (1-\theta)^2}{4}
	   \biggr) \, \lvert \theta \rvert \, \sqrt{1-\theta} \,
           z(\theta) 
    = \sqrt{\frac2\pi} \, \frac{\Phi(\alpha)}\alpha \,.
  \label{e.prefactor}
\end{equation}
Since $\Phi(\alpha)=u^*$, this altogether implies the claim.
\end{proof}

\begin{lemma}
$\displaystyle G(\theta) \sim \sqrt{\frac{2}{\pi}} \,
\frac{u^*}{\alpha^3} \, \exp \biggl( \frac{\alpha^2}4 \biggr) \,
(1+\theta)^{\tfrac32} \, \exp \biggl( -\frac{\alpha^2}{2(1+\theta)}
\biggr)$ as $\theta \to -1$.
\end{lemma}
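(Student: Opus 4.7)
The plan is to proceed exactly as in the proof of Lemma~\ref{l.g1}, starting from the representation \eqref{e.G.alt}, which reads
\begin{equation*}
  G(\theta) = \frac{\Phi(\zeta_\theta)\,\alpha^3}{2\sqrt{\pi}\,\zeta_\theta^3}\,
  \exp\!\left(-\frac{\alpha^2(1-\theta)^2}{4}\right)\,
  |\theta|\,(1-\theta)\,
  \int_0^{z(\theta)} \exp(-1/\sigma^2)\,\d\sigma
\end{equation*}
for some $\zeta_\theta\in[\alpha|\theta|,\alpha]$, with $z(\theta)$ given by \eqref{e.ztheta}. As $\theta\to-1$, the decisive change is that $z(\theta)\to 0^+$ rather than $z(\theta)\to\infty$, so the task reduces to replacing the large-$z$ asymptotic \eqref{sim.av} by its small-$z$ counterpart and then tracking constants.

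First, I would establish
\begin{equation*}
  \int_0^z \exp(-1/\sigma^2)\,\d\sigma \sim \frac{z^3}{2}\,\exp(-1/z^2)
  \qquad \text{as } z\to 0^+.
\end{equation*}
This is a standard Laplace-type endpoint computation: the integrand is monotone on $(0,z)$ with maximum at $\sigma=z$, where the exponent $f(\sigma)=-1/\sigma^2$ has $f'(z)=2/z^3$, giving the claimed leading order. Equivalently, the substitution $t=1/\sigma$ followed by one integration by parts on $\int_{1/z}^\infty t^{-2}\,e^{-t^2}\,\d t$ produces the same result.

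Next I would combine the two exponential factors. Using the identity $1/z(\theta)^2 = \alpha^2\theta^2(1-\theta)/(4(1+\theta))$, which is immediate from \eqref{e.ztheta}, a short algebraic manipulation gives the exact cancellation
\begin{equation*}
  -\frac{\alpha^2(1-\theta)^2}{4} - \frac{1}{z(\theta)^2}
  = -\frac{\alpha^2(1-\theta)}{4(1+\theta)}\,\bigl[(1-\theta)(1+\theta)+\theta^2\bigr]
  = -\frac{\alpha^2(1-\theta)}{4(1+\theta)},
\end{equation*}
since $(1-\theta)(1+\theta)+\theta^2=1$. Rewriting $(1-\theta)/(1+\theta) = 2/(1+\theta)-1$ converts this exponent to $\alpha^2/4 - \alpha^2/(2(1+\theta))$, which produces precisely the exponential factors appearing in the claim.

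Finally, I would pass to the limit in the remaining prefactor: $\zeta_\theta\to\alpha$ together with the matching condition $\Phi(\alpha)=u^*$ yield $\Phi(\zeta_\theta)/\zeta_\theta^3\to u^*/\alpha^3$; moreover $|\theta|(1-\theta)\to 2$, and $z(\theta)^3 \sim (2(1+\theta)/\alpha^2)^{3/2} = 2^{3/2}(1+\theta)^{3/2}/\alpha^3$. Collecting constants gives the stated asymptotic. The only mildly nonroutine step is the endpoint Laplace computation, but it is classical; the algebraic cancellation in the exponent is an exact identity, not merely asymptotic, which keeps the bookkeeping transparent and avoids having to control any subdominant error in the exponential factor.
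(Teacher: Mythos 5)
Your proposal is correct and follows essentially the same route as the paper: the representation \eqref{e.G.alt}, the small-$z$ asymptotic $\int_0^z \exp(-1/\sigma^2)\,\d\sigma \sim \tfrac{z^3}{2}\exp(-1/z^2)$, and direct substitution in the prefactor. The only (minor, and welcome) refinement is your observation that the combined exponent collapses to $-\alpha^2(1-\theta)/(4(1+\theta))$ as an exact identity, whereas the paper reaches the same exponential factors by an asymptotic expansion of $-\tfrac{\alpha^2\theta^2}{4}\,\tfrac{1-\theta}{1+\theta}$.
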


\begin{proof}
First note that 
\begin{equation}
  \int_0^{z}
    \exp \biggl(
           - \frac1{\sigma^2}
         \biggr) \, \d \sigma 
  \sim \frac{z^3}{2} \, \exp \biggl( -\frac1{z^2} \biggr)
\end{equation}
as $z \to 0$.  Thus, in the limit $\theta \to -1$ with $z(\theta)$
given by \eqref{e.ztheta},
\begin{align}
  \int_0^{z(\theta)}
    \exp \biggl(
           - \frac1{\sigma^2}
         \biggr) \, \d \sigma 
  & \sim \frac{4}{\alpha^3 \, \lvert \theta \rvert^3} \, 
       \biggl(
         \frac{1+\theta}{1-\theta}
       \biggr)^{\tfrac32} \, 
       \exp \biggl( 
              - \frac{\alpha^2 \, \theta^2}4 \, 
                \frac{1-\theta}{1+\theta}
            \biggr) 
    \notag \\
  & \sim \frac{\sqrt 2}{\alpha^3} \, (1+\theta)^{\tfrac32} \, 
       \exp \biggl( 
              \frac54 \, \alpha^2
            \biggr) \,
       \exp \biggl( 
              - \frac{\alpha^2}{2(1+\theta)}
            \biggr) \,.
  \label{e.intasymptotics}
\end{align}
Then, using expression \eqref{e.G.alt} for $G(\theta)$ as in the proof
of Lemma~\ref{l.g1}, noting that the limit of the prefactor can be
obtained by direct substitution, and using \eqref{e.intasymptotics},
we obtain the claim.
\end{proof}

Hence, $G$ is continuous on $[-1,0)$ and $(0,1]$.  We next determine
the asymptotic behavior of $G$ at $\theta=0$ depending on $\kappa$.
To simplify notation, we write
\begin{equation}
  C_1 = \frac{u^*}{\alpha^\kappa
  M \bigl(\frac\kappa2,\kappa+\frac12,-\frac{\alpha^2}4 \bigr)} \,.
  \label{e.c1}
\end{equation}
to denote the constant prefactor in the explicit expression
\eqref{e.phi.gamma} for $\Phi$ in the case $\eta<\alpha$.

\begin{lemma} \label{l.asymptotic}
\label{kappa}
When $\kappa>2$, $G$ extends to a continuous function on $[-1,1]$ with
\begin{equation}
  G(0) = \frac\alpha{\sqrt \pi} \, 
         \exp \biggl( -\frac{\alpha^2}4 \biggr) 
         \int_0^\alpha \frac{\Phi(\zeta)}{\zeta^3} \, \d \zeta \,.
\end{equation}
When $1<\kappa<2$, as $\theta \to 0$,
\begin{equation}
  G(\theta) 
  \sim \lvert\theta\rvert^{\kappa-2} \, C_1 \,
       \frac{\alpha^{\kappa-1}}{\sqrt\pi}
       \int_0^1
         \exp \biggl( 
           - \frac{\alpha^2}%
                  {4 \, \sigma^2}
         \biggr) \,
         (1-\sigma^2)^{-\tfrac\kappa2} \, \d \sigma \,.
  \label{e.kappa1-2}
\end{equation} 
Finally, when
$\kappa=2$, as $\theta \to 0$,
\begin{equation}
  G(\theta) 
  \sim - C_1 \, \frac\alpha{\sqrt\pi}
         \exp \biggl( 
           - \frac{\alpha^2}{4}
         \biggr) \,
         \ln \lvert \theta \rvert \,.
  \label{e.kappa=2}
\end{equation}
\end{lemma}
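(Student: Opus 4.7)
The plan is to handle the three cases with two different changes of variables.  For $\kappa>2$, I would rescale $\zeta=\alpha\xi$ in \eqref{e.G} to obtain
\begin{equation*}
  G(\theta) = \frac{1}{\alpha\sqrt{\pi}} \int_{|\theta|}^{1}
    \frac{\Phi(\alpha\xi)}{\xi^{2}\sqrt{\xi^{2}-\theta^{2}}} \,
    \exp\biggl(-\frac{\alpha^{2}\xi^{2}(1-\theta)^{2}}{4(\xi^{2}-\theta^{2})}\biggr) \, \d\xi,
\end{equation*}
and apply dominated convergence.  The pointwise limit of the integrand is $\Phi(\alpha\xi) \, e^{-\alpha^{2}/4}/\xi^{3}$, which is integrable on $(0,1]$ because $\Phi(\alpha\xi)\sim C_{1}\alpha^{\kappa}\xi^{\kappa}$ produces $\xi^{\kappa-3}$, integrable precisely when $\kappa>2$; reversing the rescaling then yields $G(0)$ as stated.

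For the remaining two cases, I would use the substitution $\zeta=\alpha|\theta|/\sqrt{1-\tau^{2}}$ with $\tau\in[0,\sqrt{1-\theta^{2}}]$, which simultaneously resolves the square-root singularity at $\zeta=\alpha|\theta|$ and extracts the $|\theta|^{\kappa-2}$ scaling.  Using $\Phi(\zeta)=C_{1}\zeta^{\kappa}M(\kappa/2,\kappa+1/2,-\zeta^{2}/4)$, a direct computation gives
\begin{equation*}
  G(\theta) = \frac{C_{1}\alpha^{\kappa-1}|\theta|^{\kappa-2}}{\sqrt{\pi}}
    \int_{0}^{\sqrt{1-\theta^{2}}}
      (1-\tau^{2})^{-\kappa/2} \,
      M\bigl(\tfrac{\kappa}{2},\kappa+\tfrac{1}{2},-\tfrac{\alpha^{2}\theta^{2}}{4(1-\tau^{2})}\bigr) \,
      \exp\biggl(-\frac{\alpha^{2}(1-\theta)^{2}}{4\tau^{2}}\biggr) \, \d\tau.
\end{equation*}
The integral representation of Kummer's function yields $0\le M(\kappa/2,\kappa+1/2,-t)\le 1$ for $t\ge 0$.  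For $1<\kappa<2$, $(1-\tau^{2})^{-\kappa/2}$ is integrable at $\tau=1$, so this provides a $\theta$-uniform majorant and dominated convergence yields \eqref{e.kappa1-2}.  For the critical case $\kappa=2$, the leading integrand is non-integrable at $\tau=1$, but the upper limit $\sqrt{1-\theta^{2}}$ serves as a cut-off; substituting $s=(1-\tau^{2})/\theta^{2}$ maps the interval to $[1,1/\theta^{2}]$, and the large-$s$ behavior of the transformed integrand is $e^{-\alpha^{2}/4}/(2s)$, integrating to $-e^{-\alpha^{2}/4}\ln|\theta|$ and producing \eqref{e.kappa=2}.

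The main obstacle is the $\kappa>2$ dominated-convergence estimate near $\xi=|\theta|$, where the $(\xi^{2}-\theta^{2})^{-1/2}$ singularity must be controlled $\theta$-uniformly.  I would split the integration at $\xi=2|\theta|$: on $[2|\theta|,1]$, the bound $\xi^{2}-\theta^{2}\ge\tfrac{3}{4}\xi^{2}$ delivers the $\theta$-independent integrable majorant $C\,\Phi(\alpha\xi)/\xi^{3}$; on $[|\theta|,2|\theta|]$, the Gaussian exponent behaves like $-\alpha^{3}|\theta|/(8(\xi-|\theta|))$ as $\xi\searrow|\theta|$, which absorbs the $(\xi-|\theta|)^{-1/2}$ singularity and bounds this subregion's contribution by $O(|\theta|^{\kappa})\to 0$.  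Once these uniform bounds are in place, the remaining steps amount to identifying the correct prefactors and verifying subdominance of the Kummer-function corrections in the $\kappa\le 2$ cases, both of which are routine.
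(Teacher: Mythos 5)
Your proposal is correct and follows essentially the same route as the paper: dominated convergence with a $\theta$-uniform integrable majorant for $\kappa>2$ (the paper absorbs the square-root singularity into the Gaussian in a single bound of the form $\sup_z z^{-1}\exp(-\alpha^2/(16z^2))$ rather than splitting at $\xi=2\lvert\theta\rvert$), followed by the same substitution $\sigma^2 = 1-\alpha^2\theta^2/\zeta^2$ to extract the factor $\lvert\theta\rvert^{\kappa-2}$ for $1<\kappa\le2$, with dominated convergence when $\kappa<2$ and extraction of the logarithm when $\kappa=2$. One minor slip: the contribution of $[\lvert\theta\rvert,2\lvert\theta\rvert]$ in the $\kappa>2$ case is $O(\lvert\theta\rvert^{\kappa-2})$, not $O(\lvert\theta\rvert^{\kappa})$, because on that subinterval the Gaussian exponent is merely bounded below by a negative constant rather than large; since $\lvert\theta\rvert^{\kappa-2}\to0$ for $\kappa>2$, the conclusion is unaffected.
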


\begin{proof}
We re-write \eqref{e.G} as
\begin{equation}
  G(\theta)
  = \frac\alpha{\sqrt \pi} \int_0^\alpha f(\theta,\zeta) \, \d \zeta
  \label{e.g-alt2}
\end{equation}
with 
\begin{equation}
  f(\theta, \zeta)
  =  \I_{[\alpha\lvert\theta\rvert,\alpha]}(\zeta) \,
      \frac1{\sqrt{1 - \frac{\alpha^2\theta^2}{\zeta^2}}} \,
      \exp \biggl(
             - \frac{\alpha^2 \, (1-\theta)^2}%
                    {4 \, \bigl(1 - \frac{\alpha^2\theta^2}{\zeta^2} \bigr)}
           \biggr) \,
      \frac{\Phi(\zeta)}{\zeta^3} \,.
\end{equation}
For $\theta<\tfrac12$,
\begin{equation}
  f(\theta, \zeta)
  \leq \sup_{z \in (0, \alpha^{-1})}
      \frac1z \, \exp \biggl( - \frac{\alpha^2}{16 \, z^2} \biggr) \,
      \frac{\Phi(\zeta)}{\zeta^3} \,.
\end{equation}
As the Kummer function in the expression for
$\Phi(\zeta)$ from \eqref{e.phi.gamma} limits to $1$ as $\zeta \to
0$, this upper bound is integrable for
$\kappa>2$.  The first case claimed in the lemma is thus a direct
consequence of the dominated convergence theorem.

When $1<\kappa\leq 2$, the integral in \eqref{e.g-alt2} is divergent
as $\theta \to 0$.  To determine its asymptotics, we apply the change
of variables
\begin{equation}
  \sigma^2 = 1-\frac{\alpha^2 \, \theta^2}{\zeta^2}
\end{equation}
and insert the explicit expression for $\Phi$, so that
\begin{equation}
  G(\theta) 
  = C_1 \, \frac\alpha{\sqrt\pi} \, (\alpha \theta)^{\kappa-2} 
    \int_0^{\sqrt{1-\theta^2}} g(\sigma; \sqrt{1-\theta^2}) \, 
      (1-\sigma)^{-\tfrac\kappa2} \, \d \sigma 
  \label{e.G-alt3}
\end{equation}
with
\begin{equation}
  g(\sigma; \xi)
  = \exp \biggl( 
           - \frac{\alpha^2 \, \bigl(1-\sqrt{1-\xi^2} \bigr)^2}%
                  {4 \, \sigma^2}
         \biggr) \,
    M \biggl(
        \frac\kappa2, \kappa + \frac12, 
        - \frac{\alpha^2 \, (1-\xi^2)}{4 \, (1-\sigma^2)}
      \biggr) \, 
    (1+\sigma)^{-\tfrac\kappa2} \,.
\end{equation}
On the closed unit square, $g$ is strictly positive and bounded.

When $1<\kappa<2$, the integrand in \eqref{e.G-alt3} is integrable on
$[0,1]$ so that once again the dominated convergence theorem applies,
implying \eqref{e.kappa1-2}.
  
When $\kappa=2$, we split the expression for $g$ into two terms,
\begin{align}\nonumber
  g(\sigma; \xi)
  & = \exp \biggl( 
           - \frac{\alpha^2 \, \bigl(1-\sqrt{1-\xi^2} \bigr)^2}%
                  {4 \, \sigma^2}
           \biggr) \, \frac1{1+\sigma}
      \notag \\
  & \quad - \frac{\alpha^2}{10} \,
      \exp \biggl( 
            - \frac{\alpha^2 \, \bigl(1-\sqrt{1-\xi^2} \bigr)^2}%
                 {4 \, \sigma^2}
           \biggr) \, \frac{1-\xi^2}{1-\sigma^2} \,
         M \biggl(
             1, \frac72, 
             -\frac{\alpha^2 \, (1-\xi^2)}{4 \, (1-\sigma^2)}
           \biggr) \, \frac1{1+\sigma}
      \notag \\
   & \equiv T_1 (\sigma; \xi)
            + \frac{1-\xi}{1-\sigma} \, T_2 (\sigma; \xi) \,.  
\end{align} 
Clearly,
\begin{equation}
  \int_0^\xi \frac{T_1(\sigma; \xi)}{1-\sigma} \, \d \sigma
  \sim - \frac12 \, 
         \exp \biggl( -\frac{\alpha^2}4 \biggr) \, \ln(1-\xi) 
  \sim - \exp \biggl( -\frac{\alpha^2}4 \biggr) \, 
         \ln \abs{\theta} \,.
  \label{e.intsim}
\end{equation}
Moreover, using mean value theorem, for some $\sigma_\xi\in(0,\xi)$ we obtain
\begin{align}
  \int_0^\xi \frac{1-\xi}{(1-\sigma)^2} \, T_2(\sigma;\xi) \, \d\sigma
  = (1-\xi) \, T_2(\sigma_\xi; \xi)
    \int_0^\xi \frac{\d\sigma}{(1-\sigma)^2}
  = \xi \, T_2(\sigma_\xi; \xi) \,.
\end{align}
As $T_2$ is bounded, the contribution from $T_1$ is asymptotically
dominant as $\theta \to 0$.  This implies \eqref{e.kappa=2}.
\end{proof}

\begin{corollary}
The integrals in \eqref{omega.explicit} and \eqref{e.Gamma} are finite.
\end{corollary}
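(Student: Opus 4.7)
The plan is to reduce both claims to showing that $G$ is integrable on $[-1,1]$, then assemble this integrability from the three preceding asymptotic lemmas. For \eqref{e.Gamma}, the reduction is immediate. For \eqref{omega.explicit}, since $K(\theta) = \theta^2 (G(\theta) + G(-\theta))$ with $\theta^2 \leq 1$ on $[0,1]$, a one-line estimate gives
\begin{equation*}
  \int_0^1 \lvert K(\theta) \rvert \, \d\theta
  \leq \int_0^1 \lvert G(\theta) \rvert \, \d\theta
       + \int_0^1 \lvert G(-\theta) \rvert \, \d\theta
  = \int_{-1}^1 \lvert G(\theta) \rvert \, \d\theta \,.
\end{equation*}

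Next, I would observe that $G$ is manifestly continuous on $(-1,0) \cup (0,1)$ by direct inspection of \eqref{e.G}, so integrability reduces to controlling the local behavior at the three candidate singular endpoints $\theta = -1, 0, 1$. I would then invoke the preceding lemmas in turn: near $\theta = 1$, Lemma~\ref{l.g1} gives $G(\theta) = O(\sqrt{1-\theta})$, which is bounded and hence trivially integrable; near $\theta = -1$, the second lemma of this appendix yields $G(\theta) = O\bigl((1+\theta)^{3/2} \exp(-\alpha^2/(2(1+\theta)))\bigr)$, which decays exponentially, so $G$ extends continuously by zero at $\theta=-1$; near $\theta = 0$, Lemma~\ref{l.asymptotic} applies and provides, in each of its three subcases for $\kappa$, a locally integrable local behavior---$G$ continuous at $0$ when $\kappa > 2$, a logarithmic singularity $G(\theta) = O(\lvert \ln \lvert \theta \rvert \rvert)$ when $\kappa = 2$, and a power-type singularity $G(\theta) = O(\lvert \theta \rvert^{\kappa-2})$ when $1 < \kappa < 2$, with exponent $\kappa - 2 > -1$.

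Piecing these local estimates together with continuity on compact subsets of $(-1,0) \cup (0,1)$ yields $G \in L^1([-1,1])$, and the corollary follows. The only subsidiary point I would need to verify is that the solvability condition \eqref{e.solvability} forces $\kappa > 1$ via $\gamma = \kappa(\kappa-1) > 0$, so that the case list of Lemma~\ref{l.asymptotic} is exhaustive. There is no substantive obstacle here; all of the analytic work has been carried out in the preceding three lemmas, and the corollary is essentially a bookkeeping step that aggregates their conclusions.
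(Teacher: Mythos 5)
Your argument is correct and follows essentially the same route as the paper: continuity of $G$ away from $\theta=0,\pm1$ combined with the endpoint asymptotics of the preceding lemmas, with the only singular behavior at $\theta=0$ being the integrable $\ln\lvert\theta\rvert$ or $\lvert\theta\rvert^{\kappa-2}$ profiles from Lemma~\ref{l.asymptotic}. The paper's own proof is a terser version of exactly this bookkeeping, so no further comparison is needed.
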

\begin{proof}
We know that $G$ is continuous on $[-1,0)$ and $(0,1]$.  The
asymptotic behavior of $G$ near $\theta=0$ is given by
Lemma~\ref{l.asymptotic}.  We note that the functions
$\ln\abs{\theta}$ and $\abs{\theta}^{\kappa-2}$ are integrable on any
neighborhood of $\theta=0$, therefore $G$ and $K$ are integrable on
$[-1,1]$.
\end{proof}

\begin{corollary}
\label{K.properties}
The kernel $K$ associated with equation \eqref{omega.explicit}
satisfies properties \textup{(i)} and \textup{(ii)}.
\end{corollary}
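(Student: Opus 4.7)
The plan is to read off both properties directly from the asymptotic lemmas already established for $G$, combined with the definition $K(\theta) = \theta^2 \, (G(\theta) + G(-\theta))$ from \eqref{e.K.G}. Nonnegativity of $K$ is immediate because $G \geq 0$ on $[-1,1]$: the integrand defining $G$ in \eqref{e.G} is a product of a positive Gaussian factor, a positive power of $\zeta$, and $\Phi(\zeta) \geq 0$. The value $K(0)=0$ follows from the $\theta^2$ prefactor.

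The less trivial half of property \ref{i.k1} is $K'(0)=0$, which I would prove by showing that $\theta \, G(\pm\theta) \to 0$ as $\theta \to 0$, since then
\begin{equation*}
  K'(0) = \lim_{\theta \to 0} \frac{K(\theta)}\theta
        = \lim_{\theta \to 0} \theta \, (G(\theta) + G(-\theta)) = 0.
\end{equation*}
I would split into the three cases distinguished in Lemma~\ref{l.asymptotic}. For $\kappa>2$, $G$ is continuous at $0$, so $\theta G(\pm\theta) \to 0$ trivially. For $\kappa = 2$, $G(\pm\theta) \sim C \lvert\ln\abs\theta\rvert$, and $\theta \ln\abs\theta \to 0$. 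For $1<\kappa<2$, $G(\pm\theta) \sim c \, \lvert\theta\rvert^{\kappa-2}$, so $\theta \, G(\pm\theta) \sim c \, \lvert\theta\rvert^{\kappa-1} \to 0$ because $\kappa>1$. These are the only cases, since the solvability condition \eqref{e.solvability} forces $\kappa > 1$ (as $\gamma = \kappa(\kappa-1) > 0$).

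For property \ref{i.k2}, Lemma~\ref{l.g1} gives $G(\theta) \sim \sqrt{2/\pi} \, (u^*/\alpha) \, \sqrt{1-\theta}$ as $\theta \to 1$, while the second lemma shows $G(-\theta) \to G(-1) = 0$ with an exponentially small rate as $\theta \to 1$. Combining, $G(\theta) + G(-\theta) \sim \sqrt{2/\pi} \, (u^*/\alpha) \, \sqrt{1-\theta}$, and the prefactor $\theta^2 \to 1$, so
\begin{equation*}
  K(\theta) \sim k \, \sqrt{1-\theta}
  \quad \text{with} \quad
  k = \sqrt{\tfrac{2}{\pi}} \, \frac{u^*}\alpha > 0,
\end{equation*}
which is property \ref{i.k2}. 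There is no real obstacle beyond bookkeeping: every ingredient has been established, and the argument is essentially a matter of assembling the three asymptotic regimes for $G$ near $0$ and the known behavior of $G$ at the endpoints $\pm 1$.
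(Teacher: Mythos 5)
Your proposal is correct and follows essentially the same route as the paper: both obtain property \ref{i.k1} from the asymptotics of $G$ at $\theta=0$ in Lemma~\ref{kappa} via $K'(0)=\lim_{\theta\searrow0}\theta\,(G(\theta)+G(-\theta))=0$, and property \ref{i.k2} from Lemma~\ref{l.g1} together with the exponential decay of $G$ near $\theta=-1$. Your version merely spells out the three cases $\kappa>2$, $\kappa=2$, $1<\kappa<2$ that the paper subsumes in the phrase ``for all $\kappa>1$''.
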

\begin{proof}
We can use Lemma~\ref{kappa} for all $\kappa>1$ to obtain 
\begin{equation}
  K(0)=\lim_{\theta\searrow0}K(\theta)
  =\lim_{\theta\searrow0}\theta^2(G(\theta)+G(-\theta))=0 \,.
\end{equation} 
Furthermore, we note that the same lemma implies
\begin{equation}
  K'(0)=\lim_{\theta\searrow0}\frac{K(\theta)-K(0)}{\theta-0}
  = \lim_{\theta\searrow0}\theta (G(\theta)+G(-\theta))=0 \,.
\end{equation}
This proves \textup{(i)}.  Property \textup{(ii)} is a direct
consequence of Lemma~\ref{l.g1}.
\end{proof}

\section{Numerical schemes}
\label{a.numerics}

We solve the full HHMO-model \eqref{e.u-phi} and the simplified
HHMO-model \eqref{e.simplified} in similarity variables, where the
full model reads
\begin{subequations}
  \label{e.full-similarity}
\begin{gather}
  s \, w_s - \eta \, w_\eta - 2 \, w_{\eta \eta}
  = \frac{2 \gamma}{\eta^2} \, H(\alpha - \eta) \, \Phi
    - 2 \, s^2 \, p \, (\Phi + w)  \,, \label{e.full-similarity.a} \\
  w_\eta(0,s) = 0 \quad \text{for } s \geq 0 \,, \\
  w(\eta,s) \to 0 \quad \text{as } \eta \to \infty
  \text{ for } s \geq 0 \,.
\end{gather}
Toward deriving a scheme, we \emph{impose} a binary precipitation
function so that $p(\eta,s)=1$ if and only if
$w(\eta',s') \geq u^* - \Phi(\eta')$ anywhere along the characteristic
line $\eta s = \eta' s'$ with $s' \leq s$.  Note that the change of
variables maps the origin of the $x$-$t$ plane onto the entire
$\eta$-axis.  Since $\sup_x \lvert u(x,t) - \psi(x,t)\rvert$ as
$t \searrow 0$, we initialize with $u(\eta,0) = \Psi(\eta)$, so that
\begin{equation}
  w(\eta,0) = \Psi(\eta) - \Phi(\eta)
  \quad \text{for } \eta \geq 0 \,.
\end{equation}
\end{subequations}
The corresponding evolution equation for the simplified model is 
\begin{equation}
  s \, w_s - \eta \, w_\eta - 2 \, w_{\eta \eta}
  = \frac{2 \gamma}{\eta^2} \, H(\alpha - \eta) \, \Phi
    - 2 \, s^2 \, p \, \Phi
  \label{e.simplified-similarity.a}
\end{equation}
where $p$ is binary as before with $p(\eta,s)=1$ if and only if
$w(\eta',s') \geq u^* - \Phi(\alpha)$ for $\eta s = \alpha s'$ with
$\eta \leq \alpha$.  Boundary and initial condition are as for the
full model.

The computational domain is taken as the finite interval
$[0, 6\alpha]$, where the factor $6$ is ensures sufficient accuracy
for the essential part of the solution.  Let $N$ denote the grid cell
at $\eta = \alpha$, $N_\full = 6 N$ the number of spatial mesh points,
and $\Delta \eta = \alpha/N$ the spatial mesh size.  Then the spatial
mesh points are given by $\eta_i = i \Delta \eta$ for
$i = 0, \dots, N_\full-1$.  Similarly, the temporal mesh points are
given by $s_j = j \Delta s$.  We write $w_i^j$ to denote the numerical
approximation to $w(\eta_i,s_j)$, $p_i^j$ to denote the numerical
approximation to $p(\eta_i,s_j)$, and set $\Phi_i = \Phi(\eta_i)$.  We
use a first order upwind finite difference for the advection term and
the standard second order finite difference approximation for the
Laplacian.  The boundary conditions are represented by
$w_{-1}^j=w_0^j$ and $w_{6N+1}^j=0$, respectively.  In time, we use a
simply first-order difference where the left hand side of the
evolution equation is treated implicitly and the right hand side
explicitly.

This leads to solving, at each time step, a system of linear equations
$A^j w^{j} = b^{j-1}$ where $A^j$ is the tridiagonal matrix with
coefficients
\begin{subequations}
\begin{gather}
  a_{i,i}^j = j + i + 4/\Delta \eta^2
  \qquad \text{for } i = 0, \dots, N_\full -1 \,, \\
  a_{i,i-1}^j = - 2/\Delta \eta^2
  \qquad \text{for } i = 1, \dots, N_\full - 1 \,, \\
  a_{i,i+1}^j = - i - 2/\Delta \eta^2
  \qquad \text{for } i = 1, \dots, N_\full - 2 \,, \\
  a_{0,1}^j = -4/\Delta \eta^2  \,,
\end{gather}
and $b^{j-1}$ is the vector whose coefficients are given for the full
model by
\begin{gather}
  b_i^{j-1}
  = \frac{2 \gamma}{\eta_i^2} \, H_{i \leq N} \, \Phi_i
    + j \, w_i^{j-1}
    - 2 \, s_j^2 \, p_i^{j-1} \, (\Phi_i + w_i^{j-1})
    \label{e.bij}
\end{gather}
\end{subequations}
for $i = 1, \dots, N_\full -1$.  For the simplified model,
expression \eqref{e.bij} is modified to read
\begin{gather}
  b_i^{j-1}
  = \frac{2 \gamma}{\eta_i^2} \, H_{i \leq N} \, \Phi_i
    + j \, w_i^{j-1}
    - 2 \, s_j^2 \, p_i^{j-1} \, \Phi_i \,.
\end{gather}
As the first term on the right of \eqref{e.full-similarity.a} resp.\
\eqref{e.simplified-similarity.a} diverges for $\eta \searrow 0$ for
$\gamma \in (0,2)$, which true for the parameters used in this
paper, we set $b_0^{j-1} = b_1^{j-1}$ in both cases; the resulting
solution, however, is insensitive to the actual value used.  This
treatment is justfied by observing that the resulting scheme is
sufficiently accurate when applied to the analytically known
self-similar solution $\Phi$.

We use the following simple transport scheme for the precipitation
function $p$.  For both full and simplified model, and spatial indices
$i<N$ corresponding to $\eta < \alpha$, we only need to transport the
values of the precipitation function along the characteristic curves.
We note that the characteristic curves define a map from the temporal
interval $[0,s]$ at $\eta=\alpha$ to the spatial interval $[0,\alpha]$
at time $s=j \Delta s$.  This map scales each grid cell by a factor
$N/j$.  We distinguish two sub-cases.  For fixed time index
$j \leq N$, a temporal cell is mapped onto at least one full spatial
cell.  Thus, we can use a simple backward lookup as follows.  Let
\begin{equation}
  \Jc(i;j) = \lfloor i \tfrac{j}N \rfloor
  \label{e.ji}
\end{equation}
be the time index in the past that corresponds best to spatial index
$i$.  Then we set
\begin{equation}
  p_i^j = p_N^{\Jc(i;j)} \,.
\end{equation}

For a fixed time index $j>N$, we do a forward mapping, i.e., we define
the inverse function to \eqref{e.ji},
\begin{equation}
  \Ic(k;j) = \lceil k \tfrac{N}j \rceil \,,
\label{e.ij}
\end{equation}
which represents the spatial index that the cell with past time index
$k$ and spatial index $N$ has moved to, and set
\begin{equation}
  \label{f.ij}
  p_i^j = \frac{N}j \sum_{\Ic(k;j)=i} p_N^k \,.
\end{equation}
Note that this expression can yield values for $p_i^j$ outside of the
unit interval, which is not a problem as the integral over the entire
interval is represented correctly.  To implement this efficiently in
code, we keep a running sum
\begin{gather}
  P_j = \sum_{k = 0}^j p_N^k
\end{gather}
which can be updated incrementally and write
\begin{gather}
  p_i^j = \frac{N}j \, (P_{\Jc(i+1;j)} - P_{\Jc(i;j)}) \,.
\end{gather}
This expression is equivalent to \eqref{f.ij}.

Finally, we need to determine the precipitation function $p_i^j$ for
$i \geq N$.  For the simplified model, we set $p_N^j=1$ whenever
$w_N^j \geq u^* - \Phi_N$ and $p_N^j=0$ otherwise; $p_i^j=0$ for all
$i >N$.

For the full model, we need to probe the entire region where
$i \geq N$.  In this region, whenever $w_i^j > u^* - \Phi_i$, the
maximum principle for the continuum problem, see
\cite{DarbenasHO:2018:LongTA}, implies that $u$ exceeds the
precipitation threshold on some curve contained in the region
$\{s \leq s_j\}$ which connects the point $(\eta_i,s_j)$ with the line
$\eta=\alpha$.  This implies that $p_k^j = 1$ for all
$N \leq k \leq i$.  Consequently, we only need to track the largest
index $I^j$ where precipitation takes place and set $p_k^j=1$ for
$k = N, \dots, I^j$.  To do so, observe that precipitation takes place
either when $u$ exceeds the threshold, or when a cell lies on a
characteristic curve where precipitation has taken place at the
previous time step.  This leads to the the expression
\begin{equation}
  I^j
  = \max \bigl\{
           \max \{ k \colon w_k^j > u^* - \Phi_k \},
           \lfloor I^{j-1} \, (j-1)/j \rfloor
         \bigr\} \,,
\end{equation}
which finalizes the description of the numerical scheme for the full
model.

We remark that the scheme for the full model is almost equivalent to
the scheme used in \cite{DarbenasHO:2018:LongTA} which was formulated
in terms of the concentration $u$.  Here, to allow for switching
between the full and the simplified model, it was necessary to
formulate both schemes in terms of the difference variable
$w = u - \phi$.  Thus, the scheme given here differs from the scheme
in \cite{DarbenasHO:2018:LongTA} by terms that account for the
difference between $\phi$ and its finite difference approximation.

\section*{Acknowledgments}

We thank Rein van der Hout for introducing us to the fast reaction
limit of the Keller--Rubinow model, Arndt Scheel for interesting
discussions on Liesegang rings and for bringing reference
\cite{DuleyFM:2017:KellerRM} to our attention, and an anonymous
referee for helpful suggestions that improved the presentation of the
paper.  This work was funded through Deutsche Forschungsgemeinschaft
(DFG, German Research Foundation) grant OL 155/5-1.  Additional
funding was received via the Collaborative Research Center TRR 181
``Energy Transfers in Atmosphere and Ocean'', also funded by the DFG
under project number 274762653.

\bibliographystyle{siam}
\bibliography{liesegang}

\begin{thebibliography}{10}

\bibitem{AbramowitzS:1972:HandbookMF}
{\scshape M.~Abramowitz and I.~A. Stegun}, eds., {\em Handbook of Mathematical
  Functions with Formulas, Graphs, and Mathematical Tables}, United States
  National Bureau of Standards, Washington, DC, USA, 1972.

\bibitem{AikiK:2008:MathematicalMB}
{\scshape T.~Aiki and J.~Kopfov\'{a}}, {\em A mathematical model for bacterial
  growth described by a hysteresis operator}, in Recent Advances in Nonlinear
  Analysis, World Sci. Publ., Hackensack, NJ, 2008, pp.~1--10.

\bibitem{BrokateS:1996:HysteresisPT}
{\scshape M.~Brokate and J.~Sprekels}, {\em Hysteresis and phase transitions},
  vol.~121 of Applied Mathematical Sciences, Springer-Verlag, New York, 1996.

\bibitem{CryerD:1980:EquivalenceLC}
{\scshape C.~W. Cryer and M.~A.~H. Dempster}, {\em Equivalence of linear
  complementarity problems and linear programs in vector lattice {H}ilbert
  spaces}, SIAM J. Control Optim., 18 (1980), pp.~76--90.

\bibitem{CurranGT:2016:RecentAR}
{\scshape M.~Curran, P.~Gurevich, and S.~Tikhomirov}, {\em Recent advance in
  reaction-diffusion equations with non-ideal relays}, in Control of
  Self-Organizing Nonlinear Systems, Underst. Complex Syst., Springer, 2016,
  pp.~211--234.

\bibitem{Darbenas:2018:PhDThesis}
{\scshape Z.~Darbenas}, {\em Existence, uniqueness, and breakdown of solutions
  for models of chemical reactions with hysteresis}, PhD thesis, Jacobs
  University, 2018.

\bibitem{DarbenasO:2018:UniquenessSK}
{\scshape Z.~Darbenas and M.~Oliver}, {\em Conditional uniqueness of solutions
  to the {K}eller--{R}ubinow model for {L}iesegang rings in the fast reaction
  limit}.
\newblock In preparation, 2018.

\bibitem{DarbenasO:2019:UniquenessSW}
\leavevmode\vrule height 2pt depth -1.6pt width 23pt, {\em Uniqueness of
  solutions for weakly degenerate cordial {V}olterra integral equations}, J.
  Integral Equ. Appl., 31 (2019), pp.~307--327.

\bibitem{DarbenasHO:2018:LongTA}
{\scshape Z.~Darbenas, R.~van~der Hout, and M.~Oliver}, {\em Long-time
  asymptotics of solutions to the {K}eller--{R}ubinow model for {L}iesegang
  rings in the fast reaction limit}.
\newblock Submitted for publication, 2018.

\bibitem{DuleyFM:2017:KellerRM}
{\scshape J.~M. Duley, A.~C. Fowler, I.~R. Moyles, and S.~B.~G. O'Brien}, {\em
  On the {K}eller--{R}ubinow model for {L}iesegang ring formation}, Proc. R.
  Soc. A, 473 (2017), p.~20170128.

\bibitem{DuleyFM:2019:RegularizationOS}
\leavevmode\vrule height 2pt depth -1.6pt width 23pt, {\em Regularization of
  the {O}stwald supersaturation model for {L}iesegang bands}, Proc. R. Soc. A,
  475 (2019), p.~20190154.

\bibitem{GurevichST:2013:ReactionDE}
{\scshape P.~Gurevich, R.~Shamin, and S.~Tikhomirov}, {\em Reaction-diffusion
  equations with spatially distributed hysteresis}, SIAM J. Math. Anal., 45
  (2013), pp.~1328--1355.

\bibitem{GurevichT:2012:UniquenessTS}
{\scshape P.~Gurevich and S.~Tikhomirov}, {\em Uniqueness of transverse
  solutions for reaction-diffusion equations with spatially distributed
  hysteresis}, Nonlinear Anal., 75 (2012), pp.~6610--6619.

\bibitem{Henisch:1988:CrystalsGL}
{\scshape H.~K. Henisch}, {\em Crystals in Gels and Liesegang Rings}, Cambridge
  University Press, 1988.

\bibitem{HilhorstHM:2007:FastRL}
{\scshape D.~{Hilhorst}, R.~{van der Hout}, M.~{Mimura}, and I.~{Ohnishi}},
  {\em Fast reaction limits and {L}iesegang bands}, in {Free Boundary Problems.
  Theory and Applications}, Basel: Birkh\"auser, 2007, pp.~241--250.

\bibitem{HilhorstHM:2009:MathematicalSO}
\leavevmode\vrule height 2pt depth -1.6pt width 23pt, {\em A mathematical study
  of the one-dimensional {K}eller and {R}ubinow model for {L}iesegang bands},
  {J. Stat. Phys.}, 135 (2009), pp.~107--132.

\bibitem{KellerR:1981:RecurrentPL}
{\scshape J.~B. Keller and S.~I. Rubinow}, {\em Recurrent precipitation and
  {L}iesegang rings}, J. Chem. Phys., 74 (1981), pp.~5000--5007.

\bibitem{KrasnoselskiiP:1989:SystemsH}
{\scshape M.~A. Krasnosel\cprime~ski\u{\i} and A.~V. Pokrovski\u{\i}}, {\em
  Systems with hysteresis}, Springer-Verlag, Berlin, 1989.
\newblock Translated from the Russian by Marek Niezg\'{o}dka.

\bibitem{KrugB:1999:MorphologicalCL}
{\scshape H.-J. Krug and H.~Brandtst{\"a}dter}, {\em Morphological
  characteristics of {L}iesegang rings and their simulations}, J. Phys. Chem.
  A, 103 (1999), pp.~7811--7820.

\bibitem{LionsS:1967:VariationalI}
{\scshape J.-L. Lions and G.~Stampacchia}, {\em Variational inequalities},
  Comm. Pure Appl. Math., 20 (1967), pp.~493--519.

\bibitem{Mayergoyz:1991:MathematicalMH}
{\scshape I.~D. Mayergoyz}, {\em Mathematical models of hysteresis},
  Springer-Verlag, New York, 1991.

\bibitem{MielkeTL:2002:VariationalFR}
{\scshape A.~Mielke, F.~Theil, and V.~I. Levitas}, {\em A variational
  formulation of rate-independent phase transformations using an extremum
  principle}, Arch. Ration. Mech. Anal., 162 (2002), pp.~137--177.

\bibitem{Smith:1984:OstwaldST}
{\scshape D.~Smith}, {\em On {O}stwald's supersaturation theory of rhythmic
  precipitation ({L}iesegang's rings)}, J. Chem. Phys., 81 (1984),
  pp.~3102--3115.

\bibitem{Stern:1954:LiesegangP}
{\scshape K.~H. Stern}, {\em The {L}iesegang phenomenon}, Chem. Rev., 54
  (1954), pp.~79--99.

\bibitem{Visintin:1986:EvolutionPH}
{\scshape A.~Visintin}, {\em Evolution problems with hysteresis in the source
  term}, SIAM J. Math. Anal., 17 (1986), pp.~1113--1138.

\bibitem{Visintin:1994:DifferentialMH}
{\scshape A.~Visintin}, {\em Differential Models of Hysteresis}, vol.~111 of
  Applied Mathematical Sciences, Springer-Verlag, 1994.

\bibitem{Visintin:2014:TenIH}
\leavevmode\vrule height 2pt depth -1.6pt width 23pt, {\em Ten issues about
  hysteresis}, Acta Appl. Math., 132 (2014), pp.~635--647.

\bibitem{ZengAY:2009:EquivalenceML}
{\scshape L.-C. Zeng, Q.~H. Ansari, and J.-C. Yao}, {\em Equivalence of
  generalized mixed complementarity and generalized mixed least element
  problems in ordered spaces}, Optimization, 58 (2009), pp.~63--76.

\end{thebibliography}

\end{document}